\newtheorem{theorem}{Theorem}[section]
\newenvironment{proof}[1][Proof]{\begin{trivlist}
		\item[\hskip \labelsep {\bfseries #1}]}{\end{trivlist}}
\DeclareMathOperator*{\argmax}{argmax}
\newcommand{\bea}{\begin{eqnarray*}}
	\newcommand{\eea}{\end{eqnarray*}}
\newcommand{\bean}{\begin{eqnarray}}
\newcommand{\eean}{\end{eqnarray}}
\newcommand{\bfX}{{\bf X}}
\newcommand{\sg}{\Sigma}
\newcommand{\what}{\widehat}
\newcommand{\lra}{\longrightarrow}
\newcommand{\calD}{\mathcal{D}}
\newcommand{\calP}{\mathcal{P}}
\newcommand{\bbP}{\mathbb{P}} 
\newcommand{\bbR}{\mathbb{R}}
\newcommand{\bbE}{\mathbb{E}}
\begin{document}

\title{Bayesian inference for high-dimensional decomposable graphs}
\author[1]{Kyoungjae Lee}
\affil[1]{Department of Statistics, Inha university}
\author[2]{Xuan Cao}
\affil[2]{Department of Mathematical Sciences, University of Cincinnati}

\maketitle
\begin{abstract}
	In this paper, we consider high-dimensional Gaussian graphical models where the true underlying graph is decomposable.
	A hierarchical $G$-Wishart prior is proposed to conduct a Bayesian inference for the precision matrix and its graph structure.
	Although the posterior asymptotics using the $G$-Wishart prior has received increasing attention in recent years, most of results assume  moderate high-dimensional settings, where the number of variables $p$ is smaller than the sample size $n$. 
	However, this assumption might not hold in many real applications such as genomics, speech recognition and climatology.
	Motivated by this gap, we investigate asymptotic properties of posteriors under the high-dimensional setting where $p$ can be much larger than $n$.
	The pairwise Bayes factor consistency, posterior ratio consistency and graph selection consistency are obtained in this high-dimensional setting.
	Furthermore, the posterior convergence rate for precision matrices under the matrix $\ell_1$-norm is derived, which turns out to coincide with the minimax convergence rate for sparse precision matrices.		
	A simulation study confirms that the proposed Bayesian procedure outperforms  competitors.		
\end{abstract}

Key words: $G$-Wishart prior; strong graph selection consistency; posterior convergence rate.



\section{Introduction}\label{sec:intro}

Consider a sample of observations from a $p$-dimensional normal model 
\bea
X_1, \ldots, X_n \mid \Omega &\overset{iid}{\sim}& N_p( 0, \Omega^{-1}) ,
\eea 
where $\Omega$ is a $p\times p$ precision matrix.
The main focus of this paper is estimating the (i) support of the precision matrix and (ii) precision matrix itself.
The support recovery of the precision matrix (or equivalently, graph selection) means estimating the locations of nonzero entries of the precision matrix.
A statistical inference on a precision matrix, or a covariance matrix $\sg = \Omega^{-1}$, is essential to uncover the dependence structure of multivariate data.
Especially, a precision matrix reveals the conditional dependences between the variables.
However, especially when the number of variables $p$ can be much larger than the sample size $n$, it is a challenging task because a consistent estimation is impossible without further assumptions \citep{lee2018optimal}.

Various restrictive matrix classes have been suggested to enable consistent estimation in such high-dimensional settings.
One of the most popular restrictive matrix classes is the set of sparse matrices. 
The sparsity assumption, which means most of entries of a matrix are zero, can be imposed on covariance matrices \citep{cai2012optimal, cai2016estimatingb}, precision matrices \citep{cai2016estimating, banerjee2015bayesian} or Cholesky factors \citep{lee2017estimating,lee2019minimax, cao2019posterior}.
In this paper, we focus on sparse precision matrices. 
They lead to sparse Gaussian graphical models, which will be described in Section \ref{subsec:GGM}.
Various statistical methods have been proposed in the frequentist literature for estimating high-dimensional sparse precision matrices using penalized likelihood estimators \citep{yuan2007ggm,rothman2008sparse,ravikumar2011high} and neighborhood-based methods 	\citep{meinshausen2006high,cai2011constrained}.
\cite{ren2015asymptotic} and \cite{cai2016estimating} suggested a regression-based method and  an adaptive constrained $\ell_1$-minimization method, respectively, and showed that the proposed methods achieve the minimax rates and graph selection consistency for sparse precision matrices.

On the Bayesian side, relatively few works have investigated asymptotic properties of posteriors for high-dimensional precision matrices.
The main obstacle is the difficulty of constructing a convenient prior for sparse precision matrices.
Because priors have to be defined on the space of sparse positive definite matrices, calculating normalizing constants is a nontrivial issue.
\cite{banerjee2015bayesian} used a mixture of point mass at zero and Laplace priors for off-diagonal entries and exponential priors for diagonal entries under the positive definiteness constraint.
They obtained the posterior convergence rate for sparse precision matrices under the Frobenius norm, but their result requires the assumption $p = o(n)$.	
Furthermore, because the marginal posterior of the graph is intractable,  they used Laplace approximation.
\cite{wang2015scaling} proposed a similar method by using continuous spike-and-slab priors for off-diagonal entries of precision matrices.
However, theoretical properties of the induced posteriors are unavailable, and a Gibbs sampling algorithm should be used due to the unknown normalizing constant.

As an alternative, the $G$-Wishart prior \citep{atay2005monte}  has been widely used to conduct a  Bayesian inference for sparse precision matrices.
One of advantages of this prior is that the prior density has a closed form if the underlying graph is decomposable, where the definition of a decomposable graph will be given in Section \ref{subsec:GGM}.
Based on the $G$-Wishart prior, \cite{xiang2015high} proved the posterior convergence rate for precision matrices under the matrix $\ell_\infty$-norm when the graph is decomposable.
However, they assumed that the graph is known, which is rarely true in real applications. 
\cite{banerjee2014posterior} also used the $G$-Wishart prior and derived the posterior convergence rate for banded (or bandable) precision matrices, whose entries farther than a certain distance from the diagonal are all zeros (or very small).
Since the underlying graph is always decomposable for banded precision matrices, the posterior can be calculated in a closed form.
However, in \cite{xiang2015high} and \cite{banerjee2014posterior}, the graph selection consistency of posteriors has not been investigated.

Recently, \cite{niu2019bayesian} and \cite{liu2019empirical} investigated asymptotic properties of posteriors using $G$-Wishart priors when the true graph is decomposable and unknown.
\cite{niu2019bayesian} established the posterior ratio consistency as well as the graph selection consistency, when $p$ grows to infinity as $n\to\infty$. 
\cite{liu2019empirical} obtained the posterior convergence rate of precision matrices under the Frobenius norm.
However, these works assumed a {\it moderate} high-dimensional setting, where $p = O( n^\delta)$ for some $0<\delta <1$.
To the best of our knowledge, asymptotic properties of posteriors for decomposable Gaussian graphical models in an {\it ultra} high-dimensional setting, say $p \gg n$, have not been established yet.

In this paper, we consider high-dimensional decomposable Gaussian graphical models.
A hierarchical $G$-Wishart prior is proposed for sparse precision matrices.
We fill the gap in the literature by showing that the proposed Bayesian method achieves the graph selection consistency and the posterior convergence rate in high-dimensional settings, even when $p \gg n$.
Under mild conditions, we first show the pairwise Bayes factor consistency (Theorem \ref{thm:PBF_cons}) and posterior ratio consistency (Theorem \ref{thm:post_ratio}).
Furthermore, the graph selection consistency of posteriors (Theorem \ref{thm:selection_cons}) is shown under slightly stronger conditions.
Based on these results, we also show that our method attains the posterior convergence rate for precision matrices (Theorem \ref{thm:post_conv}) under the matrix $\ell_1$-norm, which is faster than the posterior convergence rates obtained in existing literature.
Furthermore, the consistency of the posterior mean is established (Theorem \ref{thm:cons_BE}).
The practical performance of the proposed method is investigated in simulation studies, which shows that our method outperforms the other frequentist methods.

The rest of paper is organized as follows.
In Section \ref{sec:prel}, we introduce notation, Gaussian graphical models, the hierarchical $G$-Wishart prior and the resulting posterior.
In Section \ref{sec:main}, we establish asymptotic properties of posteriors such as the graph selection consistency and posterior convergence rate.
Simulation studies focusing on both the graph selection and covariance estimation are provided in Section \ref{sec:simul}, and a discussion is given in Section \ref{sec:disc}.
The proofs of the main results are provided in the \hyperref[appn]{Appendix}.

\section{Preliminaries}\label{sec:prel}

\subsection{Notation}\label{subsec:notation}
For any positive sequences $a_n$ and $b_n$, we denote $a_n = o(b_n)$, or equivalently, $a_n \ll b_n$, if $a_n/b_n \lra 0$ as $n\to\infty$, and $a_n = O(b_n)$, or equivalently, $a_n \lesssim b_n$, if there exists a constant $C>0$ such that $a_n/b_n \le C$ for all sufficiently large $n$.
We denote $a_n \asymp b_n$ if there exist positive constants $C_1$ and $C_2$ such that $C_1 \le a_n/b_n \le C_2$.
For any $p\times p$ matrix $A= (A_{ij})$, $P \subset \{1,\ldots,p\}$ and $1\le j\le p$, let $A_{P} = (A_{ij} )_{i,j\in P} \in \bbR^{|P|\times |P|}$ and $A_{P j} = (A_{ij})_{i\in P} \in \bbR^{|P|\times 1}$ be submatrices of $A$.
For any $p\times p$ matrix $A$, we define the matrix $\ell_w$-norm by
\bea
\|A\|_w &=& \sup_{x\in \bbR^p, \|x\|_w=1 } \| A x \|_w 
\eea
for any integer $1\le w \le \infty$, where $\|a\|_w$ is the vector $\ell_w$-norm for any $a \in \bbR^p$.
As special cases, we have
\bean
\|A\|_1 &=& \sup_{x\in \bbR^p, \|x\|_1=1 } \| A x \|_1 \,\, = \,\, \max_{1\le j\le p} \sum_{i=1}^p  | A_{ij} |  , \nonumber\\ 
\|A\| &=& \|A\|_2 \,\,=\,\, \sup_{x\in \bbR^p, \|x\|_2=1 } \| A x \|_2 \label{spectral} \\ 
&=& \big\{ \lambda_{\max}(A^T A)  \big\}^{1/2},  \nonumber
\eean
where $\lambda_{\max}(A)$ is the largest eigenvalue of $A$.
The matrix $\ell_2$-norm, \eqref{spectral}, is called the spectral norm.

\subsection{Gaussian graphical models}\label{subsec:GGM}

Consider an undirected graph by $G= (V,E)$, where $V= \{1,\ldots,p\} = [p]$ and $E \subseteq \{(i,j): i<j , (i,j) \in V \times V \}$.
For simplicity, we denote the number of edges in a graph $G$ by $|G|$.
Let $P_G$ be the set of all $p\times p$ positive definite matrices $\Omega = (\Omega_{ij} )$ with $\Omega_{ij} \neq 0$ if and only if $(i,j) \in E$.
Suppose that we observe the data from the $p$-dimensional Gaussian graphical model,
\bean\label{model}
X_{1}, \ldots, X_{n} \mid \Omega &\overset{iid}{\sim}& N_p (0, \Omega^{-1}) ,
\eean
where $\Omega \in P_G$ is a precision matrix.
Since the graph $G$ is usually unknown, both recovery of the graph $G$ and estimation of the precision matrix $\Omega$ are the main goals of this paper.
We consider the high-dimensional setting where $p=p_n$ grows to infinity as the sample size $n$ gets larger.

We present here some necessary background on graph theory  to be self-contained.
A graph is said to be complete if all vertices are joined by an edge, and a complete subgraph that is maximal is called a clique.
For given vertices $v$ and $w$ in $V$, a path of length $k$ from $v$ to $w$ is a sequence of distinct vertices $v_0, v_1,\ldots, v_k$ such that $v_0=v$, $v_k=w$ and $(v_{i-1}, v_i) \in E$ for all $i=1,\ldots,k$.
As a special case, if $v = w$, then the path is called the cycle of length $k$.
A chord is an edge between two vertices in a cycle but itself is not a part of the cycle.
An undirected graph $G$ is said to be decomposable if every cycle of length greater than or equal to 4 possesses a chord \citep{lauritzen1996graphical}. 
One of the advantages of working with a decomposable graph $G$ is that, for any  decomposable graph $G$, there exist a perfect sequence of cliques $P_1, \ldots, P_h$ and the separators $S_2,\ldots, S_h$ defined as $S_l = ( \cup_{j=1}^{l-1}P_j ) \cap P_l$ for $l=2,\ldots, h$ (\cite{lauritzen1996graphical}, Proposition 2.17).
Here, a sequence is said to be perfect if every $S_l$ is complete and, for all $j>1$, there exists a $l <j$ such that $S_j \subseteq P_l$. 
In this paper, we will focus on decomposable graphs mainly to exploit this property.

\subsection{Hierarchical $G$-Wishart prior}\label{subsec:prior}

We consider a hierarchical prior for the precision matrix $\Omega$ in \eqref{model}.
First, we impose the following prior on the graph $G$, 
\bean\label{prior_G}
\pi(G) &\propto& \binom{p(p-1)/2}{|G|}^{-1}  \exp \big\{ - |G|  \, C_\tau \log p \big\} \, I( G \in  \calD , \,\,  |G| \le R )   ,
\eean
for some constant $C_\tau>0$ and positive integer $R$, where $\calD$ is a set of all decomposable graphs. 
The condition $|G| \le R$ implies that we focus only on the graphs not having too large number of edges.
The prior \eqref{prior_G} consists of two parts: priors for the graph size and the locations of edges.
By using the prior \eqref{prior_G}, the prior mass decreases exponentially with respect to the graph size $|G|$, and given a graph size, the locations of edges are sampled from a uniform distribution.
Similar priors have been commonly used in high-dimensional regression \citep{castillo2015bayesian,yang2016computational,martin2017empirical} and covariance literature \citep{lee2019minimax,liu2019empirical}.

For a given graph $G$, we will work with the $G$-Wishart prior \citep{atay2005monte} 
\bea
\Omega \mid G &\sim& W_{G} ( \nu , A ), 
\eea
whose density function is given by
\bea
\pi(\Omega \mid G ) &=& \frac{1}{I_G(\nu, A) } \det(\Omega)^{(\nu-2)/2} \exp \Big\{ - \frac{1}{2} tr( \Omega A)  \Big\}, \quad \Omega \in P_G,
\eea
where $\nu> 2$, $A$ is a $p\times p$ positive definite matrix and $I_G(\nu, A)$ is the normalizing constant.
The normalizing constant   can be calculated in a closed form if the graph $G$ is decomposable. 
The $G$-Wishart prior is one of the most popular prior distributions for  precision matrices in Gaussian graphical models.
For examples, \cite{banerjee2014posterior,xiang2015high} and \cite{liu2019empirical} used the $G$-Wishart prior in high-dimensional settings.

There are four hyperparameters in the proposed hierarchical $G$-Wishart prior: $C_\tau$, $R$, $\nu$ and $A$. 
To obtain desired asymptotic properties of posterior,  appropriate conditions for hyperparameters will be introduced in Section \ref{sec:main}.

\subsection{Posterior}\label{subsec:posterior}

For Bayesian inference on the graph $G$ and precision matrix $\Omega$, the joint posterior $\pi(\Omega, G \mid\bfX_n)$ should be calculated.
Due to the conjugacy of the $G$-Wishart prior, we have
\bea
\Omega \mid G, \bfX_n &\overset{ind}{\sim}& W_{G}(n + \nu , \,  \bfX_n^T \bfX_n + A), \\
\pi(G \mid \bfX_n )
&\propto& f(\bfX_n \mid G) \pi(G)\\
&\propto&  \frac{I_{G} (n+\nu, \bfX_n^T \bfX_n+ A)}{I_{G} (\nu, A)} \pi(G) ,
\eea
where $\bfX_n = (X_{1}, \ldots, X_{n})^T$ and $f(\bfX_n \mid G)$ is the marginal likelihood
\bea
f( \bfX_n \mid G ) 
&=& \int f(\bfX_n \mid \Omega) \pi(\Omega \mid G) d\Omega  \\
&=& (2\pi)^{- n p/2} \frac{I_{G} (n+\nu, \bfX_n^T \bfX_n+ A)}{I_{G} (\nu, A)} .
\eea
The posterior samples of $(G, \Omega)$ can be obtained from $\pi(G\mid\bfX_n)$ and $\pi(\Omega \mid G, \bfX_n)$ in turn.
Because the marginal posterior $\pi(G\mid\bfX_n)$ is only available up to some unknown normalizing constant, Markov chain Monte Carlo (MCMC) methods such as the Metropolis-Hastings (MH) algorithm should be adopted.

\section{Main results}\label{sec:main}

In this section, we show asymptotic properties of the proposed Bayesian procedure in high-dimensional settings.
Let $G_0 = (V, E_0)$ be the true graph, and $P_{0,1}, \ldots, P_{0, h_0}$ and $S_{0,2},\ldots, S_{0, h_0}$ be the corresponding cliques and separators in a perfect ordering.
Let $\Omega_0 =(\Omega_{0,ij})$ and $\sg_0 = (\Sigma_{0,ij}) = \Omega_0^{-1}$ be the true precision and covariance matrices, respectively.
We assume that the data were generated from the $p$-dimensional Gaussian graphical model with the true precision matrix $\Omega_0\in P_{G_0}$, i.e.,
\bea
X_1,\ldots, X_n &\overset{iid}{\sim}& N_p( 0, \Omega_0^{-1}).
\eea
For given a random vector $Y= (Y_1,\ldots, Y_p)^T \sim N_p(0, \sg_0)$ and an index set $S \subseteq [p]
\setminus \{i,j\}$, we denote $\rho_{ij \mid S }$ as the partial correlation between $Y_{i}$ and $Y_{j}$ given $Y_{S} = (Y_k)_{k \in S}$, i.e., $\rho_{ij \mid S } = \Sigma_{0,ij \mid S}/(\Sigma_{0,ii \mid S} \Sigma_{0,jj \mid S} )^{1/2}$, where $\Sigma_{0,ij \mid S} = \Sigma_{0,ij} -   \sg_{0, i S}\sg_{0,S}^{-1} \sg_{0, Sj}$ for any $i,j \in [p]$.
If $S = \phi$, then $\rho_{ij \mid S }$ reduces to the correlation between $Y_{i}$ and $Y_{j}$, $\rho_{ij} = \Sigma_{0,ij}/(\Sigma_{0,ii} \Sigma_{0,jj} )^{1/2}$.

To obtain desired asymptotic properties of posteriors, we assume the following conditions for the true graph and partial correlations.   \\

\noindent
{\bf(A1)} $|G_0| \le  R$   \\
{\bf(A2)} $\max \{ | \rho_{ij \mid S \setminus \{i,j\} }| : (i,j) \in E_0,  S \subseteq [p],  |S| \le 3 R   \}  \le 1- 1/\sqrt{(n\vee p)}$   \\
{\bf(A3)} $\min \{ \rho_{ij \mid S \setminus \{i,j\} }^2:  (i,j) \in E_0,  S \subseteq [p],  |S| \le 3R \}   \ge C_\beta R^2 \log (n\vee p)/n$ for some constant $C_\beta>0$   \\

Condition (A1) says that the size of the true graph $G_0$ is not too large so that it resides in the prior support.
In fact, the upper bound for $|G_0|$ does not need to be exactly equal to $R$, but just less than $R$.
In the literature, \cite{liu2019empirical} and \cite{niu2019bayesian} also introduced similar conditions to control the number of true edges in $G_0$.
Condition (A2) implies that the $i$th and $j$th variables have an imperfect linear relationship.
It means that there is no set of variables $S$ with $|S|\le 3R$ that makes $i$ and $j$ with $(i,j)\in E_0$ have a perfectly linear relationship when the effects of those variables are removed.
Although $1 - 1/\sqrt{(n\vee p)}$ is used as an upper bound for simplicity, a more general upper bound, $1- 1/(n\vee p)^{c}$ for some constant $c>0$, can be used with a proper change in the lower bound of $C_\beta$ in Theorems \ref{thm:PBF_cons} and \ref{thm:post_ratio}.
Let $\min_{S \subseteq [p], |S|\le p}   \rho_{i j \mid S \setminus \{i,j\} } $ be the {\it minimum partial correlation}, then it is nonzero whenever $(i,j) \in E_0$ in a decomposable graph $G_0$ \citep{nie2017inferring}.  
Condition (A3) gives a lower bound for the nonzero partial correlations $\rho_{i j \mid S \setminus \{i,j\} } $ with $|S| \le 3R$ rather than $|S| \le p$.
Note that the left-hand side of condition (A3) is nonzero whenever the minimum partial correlation is nonzero. 
Thus, this is weaker than a condition on the minimum partial correlation.
In our theory, this condition corresponds to the {\it beta-min condition} in the high-dimensional regression literature, which is essential to obtain selection consistency results \citep{yang2016computational,martin2017empirical,cao2019posterior}.
Note that the above conditions are not easy to verify in practice except for some simple situations.
For example, they are easily satisfied when the number of variables $p$ is fixed.
\\

\noindent
{\bf (P1)} Assume that $\nu$ and $C_\tau$ are fixed constants such that $\nu > 2$ and $C_\tau>0$, respectively. 
Further assume that $R = C_r \{ n/ \log (n\vee p) \}^{\xi/2}$ and $A = g \bfX_n^T \bfX_n$, where $g \asymp (n\vee p)^{-\alpha}$ for some constants $C_r > 0$,  $0\le \xi\le 1$ and $\alpha>0$.\\

Here, ``P'' stands for ``prior''.
Condition (P1) is a sufficient condition for hyperparameters to guarantee the desired asymptotic properties of posteriors. 
Together with condition (A1), $R = C_r \{n/ \log (n\vee p)\}^{\xi/2}$ implies that the number of edges in the true graph $G_0$ is at most of order $ \{n/\log (n\vee p)\}^{\xi /2}$.
By choosing the scale matrix $A=g \bfX_n^T \bfX_n$, our prior can be seen as an inverse of the hyper-inverse Wishart $g$-prior \citep{carvalho2009objective}.
\cite{niu2019bayesian} used a similar prior with $g= n^{-1}$ as suggested by \cite{carvalho2009objective}.
Note that the hyperparameter $g$ serves as a penalty term for adding false edges in graphs, thus we essentially use a stronger penalty than \cite{carvalho2009objective} and \cite{niu2019bayesian} if $\alpha>1$.

\subsection{Graph selection properties of posteriors}\label{subsec:selection}

The first property is consistency of pairwise Bayes factors using $G$-Wishart priors.
Consider the hypothesis testing problem $H_0: G=G_0$ versus $H_1: G=G_1$, for some graph $G_1 \neq G_0$.
If we use priors $\Omega \sim W_{G_0}(\nu, A)$ and $\Omega \sim W_{G_1}(\nu, A)$ under $H_0$ and $H_1$, respectively, we support either $H_0$ or $H_1$ based on the Bayes factor $B_{10}(\bfX_n) := f(\bfX_n \mid G_1) / f(\bfX_n \mid G_0)$.
In general, for a given threshold $C_{th} >0$, we support $H_1$ if $\log B_{10}(\bfX_n)> C_{th}$, and support $H_0$ otherwise.
Theorem \ref{thm:PBF_cons} shows that we can consistently support the true hypothesis $H_0: G=G_0$ based on the pairwise Bayes factor $B_{10}(\bfX_n)$ for any $G_1\neq G_0$.

\begin{theorem}[Pairwise Bayes factor consistency]\label{thm:PBF_cons}
	Assume that conditions (A1)--(A3) and (P1) hold with $C_\beta > 10$ and $\alpha  > 5/2$.
	Then, we have
	\bea
	\frac{f(\bfX_n \mid G)}{f(\bfX_n \mid G_0 )} &\overset{p}{\lra}& 0
	\eea
	as $n\to\infty$, for any decomposable graph $G \neq G_0$  such that $|G|\le R$.
\end{theorem}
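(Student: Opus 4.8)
The plan is to reduce the marginal likelihood ratio to an essentially edge-by-edge quantity and then control each piece with conditions (A2)--(A3). Because $G$ and $G_0$ are both decomposable, the $G$-Wishart normalizing constant factors over cliques and separators, and the choice $A=g\bfX_n^T\bfX_n$ of (P1) makes the posterior scale $\bfX_n^T\bfX_n+A=(1+g)\bfX_n^T\bfX_n$; hence $f(\bfX_n\mid G)/f(\bfX_n\mid G_0)$ has a closed form built from ratios of multivariate Gamma functions, a power of $g$, and, writing $M=\bfX_n^T\bfX_n$, the ``generalized determinants'' $\prod_l\det(M_{P_l})/\prod_l\det(M_{S_l})$ attached to the clique--separator sequences of $G$ and of $G_0$. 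Since the decomposable graphs are connected under single-edge additions and deletions that keep the graph decomposable, I would join $G_0$ to $G$ by such a path --- when $G$ and $G_0$ are not nested, first adding the fill-in edges of a low-treewidth decomposable triangulation $G^*$ of $G\cup G_0$ and then deleting the remaining edges, so that the edges common to $G$ and $G_0$ are never touched --- and expand $\log\{f(\bfX_n\mid G)/f(\bfX_n\mid G_0)\}$ as a telescoping sum. A single step, which turns a separator $S$ into a clique $S\cup\{i,j\}$, contributes exactly $\kappa_n(|S|)-\tfrac n2\log(1-\what\rho_{ij\mid S}^2)$, where $\kappa_n(s)$ collects the Gamma ratios and the factor of $g$, equals $\tfrac12\log n+\log g+O(1)$ (hence $\asymp-\alpha\log(n\vee p)$ when $p\gg n$, and tends to $-\infty$ once $\alpha$ is past a small threshold), and $\what\rho_{ij\mid S}$ is the sample partial correlation built from $M$ via the identity $\det(M_{S\cup\{i,j\}})\det(M_S)=(1-\what\rho_{ij\mid S}^2)\det(M_{S\cup\{i\}})\det(M_{S\cup\{j\}})$, valid for any positive definite $M$.

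The second ingredient is a concentration lemma: uniformly over the pairs $(i,j)$ and index sets $S$ with $|S|\le 3R$ that can arise along such a path, $\what\rho_{ij\mid S}$ is close to the population partial correlation $\rho_{ij\mid S\setminus\{i,j\}}$. This follows from sub-Gaussian operator-norm bounds for the submatrices $(\bfX_n^T\bfX_n/n)_T$ with $|T|\le 3R+2$ --- each concentrating around $\sg_{0,T}$ at rate $\sqrt{(|T|+\log(n\vee p))/n}$ --- together with a union bound over the relevant index sets; condition (P1), $R=C_r\{n/\log(n\vee p)\}^{\xi/2}$, makes these matrices small enough to stay well conditioned. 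Condition (A2) enters precisely here: it keeps the population partial correlations of true edges, hence their sample versions and $1-\what\rho_{ij\mid S}^2$, bounded away from $0$ and $1$, which is what makes the determinant identity above stable.

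With these two facts the theorem follows from a dichotomy applied to the telescoping sum. If $G$ omits a true edge $(i,j)\in E_0\setminus E$, then along the path this edge is created inside $G^*$ and then destroyed; since $(i,j)\in E_0$ in a decomposable $G_0$, every $\rho_{ij\mid S\setminus\{i,j\}}$ is nonzero, so by (A3) and the concentration lemma the destruction step alone contributes at most $-\tfrac n2\log(1-C_\beta R^2\log(n\vee p)/n)(1+o(1))\le-\tfrac12 C_\beta R^2\log(n\vee p)(1+o(1))$, which $\to-\infty$; taking $C_\beta$ large (the assumption $C_\beta>10$) lets this dominate the remaining contributions --- the net penalty, whose $\log g$ part telescopes to the path-independent total $(|G|-|G_0|)\log g$, the lower-order residuals left by the Gamma ratios over the fill-in edges, and the $O_p(\cdot)$ fluctuations of the false-edge data terms. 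If instead $G\supseteq G_0$, the path adds only false edges; for each such edge the relevant population partial correlation vanishes by the global Markov property, so its data term is $O_p(1)$, while the penalty $\kappa_n\asymp-\alpha\log(n\vee p)$ per added edge sends the sum to $-\infty$ --- this is where the requirement $\alpha>5/2$ supplies the necessary margin. A general $G$ combines the two: through $G^*$, the penalties and the fill-in data terms cancel down to $(|G|-|G_0|)\log g$, leaving the omitted true edges to drive the ratio to $0$.

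The part I expect to be the real work is this bookkeeping: showing that the clique/separator structure shared by $G$ and $G_0$ cancels so that only $O(|G|+|G_0|)$ (or, through the triangulation, $O(R^{3/2})$) local terms survive; pinning down the exact per-step penalty $\kappa_n$, in particular the cancellation of the $2^{n|C|/2}$ factors and the powers $\det(\cdot)^{-n/2}$ against the leading Stirling terms of the multivariate Gamma functions; keeping every conditioning set that arises of size $\le 3R$ so that (A2)--(A3) apply to it; and verifying that all accumulated residuals are of strictly smaller order than the $R^2\log(n\vee p)$ signal from an omitted true edge and than the per-edge penalty $|\log g|\asymp\alpha\log(n\vee p)$ --- which is exactly what the margins $C_\beta>10$ and $\alpha>5/2$ are there to guarantee.
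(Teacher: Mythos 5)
Your proposal follows essentially the same route as the paper: telescoping the Bayes factor along single-edge chains of decomposable graphs (routed through a minimal triangulation of $G\cup G_0$ in the non-nested case), bounding each step by a penalty of order $\log g+\tfrac12\log n$ plus $-\tfrac n2\log(1-\hat\rho^2_{i j\mid S})$, and using concentration of sample partial correlations uniformly over conditioning sets of size at most $3R$ so that false-edge steps cost $O(\log(n\vee p))$ each while an omitted true edge supplies a $-\tfrac12 C_\beta R^2\log(n\vee p)$ signal via (A3). The only caveats are bookkeeping details you already flag as the real work: the surviving fill-in count through the triangulation is $O(R^2)$ (the paper uses $|(G\cup G_0)_m|\le 2R^2$, which is precisely what calibrates $C_\beta>10$ against the $R^2$ scaling in (A3)), and the false-edge data terms must be controlled uniformly at level roughly $2\log(n\vee p)$ per step rather than $O_p(1)$, which is exactly where $\alpha>5/2$ enters.
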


\cite{niu2019bayesian} showed the convergence rates of pairwise Bayes factor (BF) (in their Theorem 4.1) on some ``good'' set $\Delta_a$ using $g = n^{-1}$, i.e., $\alpha=1$ in our notation, while we use $\alpha > 5/2$ in Theorem \ref{thm:PBF_cons}.
However, their result neither guarantees  the pairwise BF consistency nor $\mathbb{P}_0(\Delta_a) \to 1$  as $n\to\infty$.
They showed the pairwise BF consistency (in their Corollary 4.1) under the {\it fixed} $p$ setting.
In this setting, the proposed model in this paper also can obtain the pairwise BF consistency using $\alpha = 1$.

The above condition for the hyperparameter $g$, i.e., $g \asymp (n\vee p)^{-\alpha}$ for $\alpha> 5/2$, is an upper bound to obtain the consistency result.
In fact, one can use an exponentially decreasing penalty to prove Theorems \ref{thm:PBF_cons} and \ref{thm:post_ratio} under current conditions, for example, $g \asymp (n \vee p)^{- \tilde{R} \alpha}$ for some $\tilde{R}= \tilde{R}_n \to \infty$ as $n\to\infty$ as long as $\tilde{R} = o(R)$.

For the rest, we consider the hierarchical $G$-Wishart prior described in Section \ref{subsec:prior}.
Theorem \ref{thm:post_ratio} shows what we call as the posterior ratio consistency. 
Note that the consistency of pairwise Bayes factors does not guarantee the posterior ratio consistency, and vice versa.
As a by-product of Theorem \ref{thm:post_ratio}, it can be shown that the posterior mode, $\what{G} = \argmax_{G} \pi(G\mid \bfX_n)$, is a consistent estimator of the true graph $G_0$.

\begin{theorem}[Posterior ratio consistency]\label{thm:post_ratio}
	Assume that conditions (A1)--(A3) and (P1) hold with $C_\beta > 10$ and $\alpha + C_\tau > 3$.
	Then, we have
	\bea
	\frac{\pi(G \mid \bfX_n)}{\pi(G_0\mid \bfX_n )} &\overset{p}{\lra}& 0
	\eea
	as $n\to\infty$, for any decomposable graph $G \neq G_0$.
\end{theorem}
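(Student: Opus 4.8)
The plan is to write the posterior ratio $\pi(G \mid \bfX_n)/\pi(G_0 \mid \bfX_n)$ as the product of the marginal-likelihood (Bayes factor) ratio and the prior ratio, i.e.
\bea
\frac{\pi(G\mid\bfX_n)}{\pi(G_0\mid\bfX_n)} &=& \frac{f(\bfX_n\mid G)}{f(\bfX_n\mid G_0)}\cdot\frac{\pi(G)}{\pi(G_0)},
\eea
and to control the two factors separately. For the Bayes factor term, Theorem \ref{thm:PBF_cons} already gives pointwise-in-$G$ convergence to zero, but here we need the ratio to be small \emph{uniformly} (after multiplication by the prior ratio and summation, since posterior ratio consistency is usually deduced by showing $\sum_{G\neq G_0}\pi(G\mid\bfX_n)/\pi(G_0\mid\bfX_n)\overset{p}{\to}0$, which actually gives strong graph selection). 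So the real work is to revisit the bound behind Theorem \ref{thm:PBF_cons} and make the exponent explicit as a function of $|G\triangle G_0|$, the symmetric difference of edge sets. I expect the bound to take the form, on an event of probability tending to one,
\bea
\frac{f(\bfX_n\mid G)}{f(\bfX_n\mid G_0)} &\le& (n\vee p)^{-c_1(\alpha)\,|G\setminus G_0|}\cdot (n\vee p)^{\,c_2\,|G_0\setminus G|},
\eea
where adding false edges ($|G\setminus G_0|$ large) is heavily penalized through the $g\asymp(n\vee p)^{-\alpha}$ scaling, while dropping true edges ($|G_0\setminus G|$) is penalized through the beta-min condition (A3) — this is exactly the dichotomy already visible in the proof of Theorem \ref{thm:PBF_cons}, and I would reuse the determinant/trace identities for decomposable graphs (the closed-form $I_G$ in terms of cliques and separators) together with the partial-correlation reparametrization to get it.

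Next I would bound the prior ratio. From \eqref{prior_G},
\bea
\frac{\pi(G)}{\pi(G_0)} &=& \frac{\binom{p(p-1)/2}{|G|}^{-1}}{\binom{p(p-1)/2}{|G_0|}^{-1}}\,\exp\big\{-(|G|-|G_0|)C_\tau\log p\big\},
\eea
valid since both $G$ and $G_0$ are decomposable with at most $R$ edges. The binomial ratio is at most $\{p(p-1)/2\}^{|G|-|G_0|}\le p^{2(|G|-|G_0|)}$ when $|G|>|G_0|$ and is bounded by $1$ (times a lower-order factor) when $|G|<|G_0|$; combined with the $\exp\{-(|G|-|G_0|)C_\tau\log p\}$ factor this contributes roughly $p^{(2-C_\tau)(|G|-|G_0|)}$. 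Folding this into the Bayes factor bound, a false edge picks up a net exponent of order $-(\alpha - 2 + C_\tau - \varepsilon)$ in $\log(n\vee p)$ per edge — this is where the hypothesis $\alpha + C_\tau > 3$ enters, leaving a strictly negative net exponent per added edge. A dropped true edge is already killed by (A3) with $C_\beta>10$, exactly as in Theorem \ref{thm:PBF_cons}.

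Finally I would assemble the pieces: on the high-probability event, for each $G\neq G_0$ the product $\frac{f(\bfX_n\mid G)}{f(\bfX_n\mid G_0)}\frac{\pi(G)}{\pi(G_0)}$ is bounded by $(n\vee p)^{-\delta|G\triangle G_0|}$ for some fixed $\delta>0$, and summing over all decomposable $G$ with $|G|\le R$ — there are at most $\sum_{k\le R}\binom{p(p-1)/2}{k}\le (p^2)^R$ of them, i.e. at most $\exp\{2R\log p\}$ — still goes to zero because $R=C_r\{n/\log(n\vee p)\}^{\xi/2}$ and each term of nonzero symmetric difference carries at least one factor $(n\vee p)^{-\delta}$; a slightly more careful accounting (the number of graphs at symmetric distance $m$ from $G_0$ is at most $(p^2)^m$, which is dominated by $(n\vee p)^{-\delta m}$ once $\delta$ is chosen large enough relative to the fixed constant $2$, or by absorbing the $p^{2m}$ into the exponents above) closes it. The main obstacle, as I see it, is making the per-edge exponent in the Bayes factor bound genuinely uniform over all decomposable competitors while keeping the random-data fluctuations (the difference between $\bfX_n^T\bfX_n$-based quantities and their population counterparts) controlled on a single event of probability tending to one; this is a concentration argument for sample partial correlations over all index sets $S$ with $|S|\le 3R$, and it is exactly the step that forces the $3R$-sized conditioning sets in (A2)–(A3) and the $R=O(\{n/\log(n\vee p)\}^{\xi/2})$ growth restriction in (P1).
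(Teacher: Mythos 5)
Your opening steps are exactly the paper's route for this theorem: for a \emph{fixed} decomposable $G\neq G_0$, write $\pi(G\mid\bfX_n)/\pi(G_0\mid\bfX_n)$ as the Bayes factor times the prior ratio, recycle the chain argument and the events $\cap_l N_l(C_1)^c$, $\cap_l N_l'(C_1')^c$ from Theorem \ref{thm:PBF_cons}, and bound the prior ratio through the binomial coefficients and the $\exp\{-C_\tau|G|\log p\}$ factor. For $G_0\subsetneq G$ the binomial ratio telescopes along the one-edge-at-a-time chain to $\prod_{l}\{(|G_{l-1}|+1)/(p(p-1)/2-|G_{l-1}|)\}\le\exp\{(|G|-|G_0|)\log R\}$, and $\alpha+C_\tau>3$ absorbs the $C_1/2\approx 2$ loss; for $G_0\nsubseteq G$ the prior ratio costs only $\exp\{(C_\tau+2)|G_0|\log(n\vee p)\}$ and is swallowed by the $C_\beta R^2/2$ beta-min gain, which is where $C_\beta>10$ is used. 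Had you stopped there, you would have the theorem, since the statement is pointwise in $G$ (convergence in probability for each fixed competitor), not a statement about all competitors simultaneously.

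The gap is your final assembly step: you replace the pointwise claim by $\sum_{G\neq G_0}\pi(G\mid\bfX_n)/\pi(G_0\mid\bfX_n)\overset{p}{\to}0$, which is the strong selection consistency of Theorem \ref{thm:selection_cons}, and that stronger statement cannot be obtained under (A3) and (P1) by these techniques -- the paper explicitly strengthens the assumptions for it. Concretely: (i) to put all competitors on a single good event you must control $\hat{\rho}_{ij\mid S}$ uniformly over all $(i,j,S)$ with $|S|\le 3R$, i.e.\ roughly $p^{3R+2}$ triples, and the union bound then forces the deviation threshold $C_1 R\log(n\vee p)/n$ rather than $C_1\log(n\vee p)/n$; this inflated noise level is exactly why Theorem \ref{thm:selection_cons} needs the $R^3$ beta-min condition (B3) in place of (A3); (ii) the per-added-edge exponent must also beat the $p^{2m}$ entropy of graphs at distance $m$, and with $g\asymp(n\vee p)^{-\alpha}$ for fixed $\alpha$ and the inflated threshold, the net exponent can be negative under $\alpha+C_\tau>3$; the paper handles this only via the exponentially stronger penalty $g\asymp(n\vee p)^{-R\alpha}$ of (P2); (iii) your claimed bound $(n\vee p)^{-\delta|G\triangle G_0|}$ is not of the right shape in the $G_0\nsubseteq G$ case, because passing through the minimal triangulation $(G\cup G_0)_m$ incurs a fill-in cost of order $|(G\cup G_0)_m|\log(n\vee p)\le 2R^2\log(n\vee p)$ in the exponent, which does not decompose per edge of $G\triangle G_0$ and is beaten only by the single $C_\beta R^2$ term coming from one missed true edge (also, as displayed, your bound gives dropped true edges a positive exponent $+c_2|G_0\setminus G|$, contradicting your own description of the beta-min penalty). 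So prove the theorem for fixed $G$ as in your first half and drop the summation; the uniform statement belongs to Theorem \ref{thm:selection_cons} under (B3) and (P2).
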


To obtain the posterior ratio consistency, \cite{niu2019bayesian} assumed  $p = O(n^{\alpha_1})$ for some $0<\alpha_1<1/2$, whereas we do not have any condition on the relationship between $n$ and $p$ as long as $p\to\infty$ as $n\to\infty$.
They also assumed $|G_0| = O(n^{\sigma })$, $1-\max_{(i,j)\in E_0} \rho^2_{ij \mid V\setminus \{i,j\}} \asymp n^{-k}$ and $\min_{(i,j)\in E_0} \rho^2_{ij \mid V\setminus\{i,j\}} \asymp n^{-\lambda}$, for some constants $0\le \sigma \le 2\alpha_1, k\ge 0$ and $0\le \lambda < \min(\alpha_1, 1/2-\alpha_1)$, which correspond to conditions (A1), (A2) and (A3) in this paper, respectively.
However, the comparison with our result is not straightforward because  they imposed conditions on $\max_{(i,j)\in E_0} \rho^2_{ij \mid V\setminus\{i,j\}}$ and $\min_{(i,j)\in E_0} \rho^2_{ij \mid V\setminus\{i,j\}}$, whereas we impose conditions on $\max_{(i,j)\in E_0, |S|\le R} \rho^2_{ij \mid S\setminus\{i,j\}}$ and $\min_{(i,j)\in E_0, |S|\le R} \rho^2_{ij \mid S\setminus\{i,j\}}$.

Next we show the strong graph selection consistency, which is much stronger than the posterior ratio consistency.
To prove Theorem \ref{thm:selection_cons}, we require the following conditions instead of conditions (A3) and (P1):  \\

\noindent
{\bf(B3)} $\min \{ \rho_{ij \mid S \setminus \{i,j\} }^2:  (i,j) \in E_0,  S \subseteq [p],  |S| \le 3R \}   \ge C_\beta R^3 \log (n\vee p)/n$ for some constant $C_\beta>0$ 	\\
\noindent{\bf (P2)} 
Assume that $\nu$ and $C_\tau$ are fixed constants such that $ \nu > 2$ and $C_\tau>0$, respectively. 
Further assume that $R = C_r  \{n/ \log (n\vee p) \}^{\xi/3} $ and $g \asymp (n\vee p)^{-R \alpha}$ for some constants $C_r>0$, $0\le\xi \le 1$ and $\alpha>0$.\\

Condition (B3) gives a larger lower bound for the nonzero partial correlations than condition (A3).
Condition (P2) implies that we further restrict the size of the true graph and use stronger penalty for adding false edges. 
Note that if we assume that the size of  the true graph is bounded above by a constant $C_r$, i.e., assuming $\xi=0$ in condition (P2), then condition (B3) is essentially equivalent to (A3) in terms of the rate.

\begin{theorem}[Strong graph selection consistency]\label{thm:selection_cons}
	Assume that conditions (A1), (A2), (B3) and (P2) hold with $C_\beta > 6$ and $\alpha > 3$.
	Then, we have
	\bea
	\pi \big(G = G_0 \mid \bfX_n \big)  &\overset{p}{\lra}& 1
	\eea
	as $n\to\infty$.
\end{theorem}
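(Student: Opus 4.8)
The plan is to establish the equivalent statement $\pi(G\neq G_0\mid\bfX_n)\overset{p}{\lra}0$. By condition (A1), $G_0$ is decomposable with $|G_0|\le R$ and hence lies in the prior support, so
$$
\pi(G\neq G_0\mid\bfX_n)=\frac{N_n}{1+N_n},\qquad N_n:=\sum_{G}\frac{\pi(G\mid\bfX_n)}{\pi(G_0\mid\bfX_n)},
$$
the sum running over decomposable $G\neq G_0$ with $|G|\le R$; it suffices to show $N_n\overset{p}{\lra}0$. I would group these $G$ according to the pair $(k_1,k_2):=(|E\setminus E_0|,|E_0\setminus E|)$ of false and missing edges, so $|G|=|G_0|+k_1-k_2$, $k_1+k_2\ge1$, and (discarding decomposability) the number of $G$ with a given $(k_1,k_2)$ is at most $\binom{p(p-1)/2}{k_1}\binom{|G_0|}{k_2}\le p^{2k_1}R^{k_2}$.

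The core step is a \emph{uniform} upper bound for a single posterior ratio, obtained by refining the analysis behind Theorem~\ref{thm:post_ratio}. Writing $A=g\bfX_n^T\bfX_n$, so that $\bfX_n^T\bfX_n+A=(1+g)\bfX_n^T\bfX_n$, and using the closed-form clique/separator factorization of the $G$-Wishart normalizing constant for decomposable graphs, the ratio $\pi(G\mid\bfX_n)/\pi(G_0\mid\bfX_n)$ factors (up to gamma-function factors that are polynomial in $n$) into contributions localized at the edges of $E\triangle E_0$, each involving only a principal submatrix of $\bfX_n^T\bfX_n$ of order at most $3R$: schematically, inserting an edge $(i,j)$ into a clique $C$, with $S:=C\setminus\{i,j\}$ and $|S|\le 3R$, contributes a factor of order $p^{-2-C_\tau}\,g\,(1-\hat\rho_{ij\mid S}^2)^{-(n+\nu+|S|+1)/2}$, where $\hat\rho_{ij\mid S}$ is the corresponding sample partial correlation, while removing an edge contributes the reciprocal-type factor. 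Condition (A2), which gives $1-\rho_{ij\mid S\setminus\{i,j\}}^2\ge(n\vee p)^{-1/2}$ for $(i,j)\in E_0$, keeps the logarithmic determinant terms at size $O(\log(n\vee p))$; on a high-probability event controlling all relevant sample partial correlations and conditional variances for $|S|\le3R$, the $(1-\hat\rho^2)^{\pm(n+\ldots)/2}$ factors are at most $(n\vee p)^{O(R)}$; and for a missing true edge $(i,j)\in E_0\setminus E$, condition (B3), $\rho_{ij\mid S\setminus\{i,j\}}^2\ge C_\beta R^3\log(n\vee p)/n$, forces the corresponding factor down to $(n\vee p)^{-cC_\beta R^3+R\alpha}$. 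Collecting the contributions of the $k_1$ false and $k_2$ missing edges — the false ones killed by the penalty $g\asymp(n\vee p)^{-R\alpha}$ and the missing ones by the signal in (B3) — one obtains, on that event,
$$
\frac{\pi(G\mid\bfX_n)}{\pi(G_0\mid\bfX_n)}\;\lesssim\;C_0^{\,k_1+k_2}\,(n\vee p)^{-c_1 k_1}\,(n\vee p)^{-c_2 k_2},
$$
with $c_1>2$ because $\alpha>3$, and $c_2>1$ for all large $n$ because $C_\beta>6$ and the choice of $R$ in (P2) make $cC_\beta R^3$ exceed $R\alpha+C_\tau+3$.

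Feeding this into $N_n$ and using the counting bound,
$$
N_n\;\lesssim\;\sum_{k_1\ge0}\sum_{k_2\ge0}\big(C_0 p^{2}(n\vee p)^{-c_1}\big)^{k_1}\big(C_0 R\,(n\vee p)^{-c_2}\big)^{k_2}\;-\;1,
$$
a product of two convergent geometric series minus one; since $p^{2}(n\vee p)^{-c_1}\to0$ and $R(n\vee p)^{-c_2}\to0$, the right side tends to zero, so $N_n\overset{p}{\lra}0$ and the theorem follows.

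The main obstacle is the uniform bound of the second step. Two things must be controlled at once over \emph{all} competing graphs: first, the sample partial correlations $\hat\rho_{ij\mid S}$ and determinant ratios of the $\le 3R$-dimensional principal submatrices of $\bfX_n^T\bfX_n/n$, which calls for a Bickel--Levina-type concentration inequality together with a union bound over the $\binom{p}{3R}$ index sets $S$ — this is exactly why (P2) caps $R$ at $C_r\{n/\log(n\vee p)\}^{\xi/3}$, keeping $R\log(n\vee p)=o(n)$ so the submatrix concentration survives the union bound; and second, the requirement that the signal in (B3) overpower \emph{both} the gain $g^{-1}=(n\vee p)^{R\alpha}$ created by deleting a true edge \emph{and} the combinatorial factor $\binom{|G_0|}{k_2}\le R^{k_2}$ — which is precisely why (B3) strengthens (A3) from $R^2$ to $R^3$ and (P2) strengthens the penalty from $g\asymp(n\vee p)^{-\alpha}$ to $g\asymp(n\vee p)^{-R\alpha}$, relative to Theorems~\ref{thm:PBF_cons}--\ref{thm:post_ratio}, where only one graph need be controlled at a time.
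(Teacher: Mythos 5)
Your overall skeleton matches the paper's strategy (bound $\pi(G\neq G_0\mid\bfX_n)$ by the sum of posterior ratios, work on a single high-probability event controlling all sample partial correlations $\hat\rho_{ij\mid S}$ with $|S|\le 3R$ via a union bound, let the penalty $g$ kill overfitted edges and the signal in (B3) kill missing true edges, then sum). But the step you call the ``core step'' is precisely the part that needs a proof, and as stated it is not correct. For two decomposable graphs that are not nested, the clique/separator factorizations of the two $G$-Wishart normalizing constants do not match up edge-by-edge over $E\triangle E_0$: the only available device is to connect $G$ and $G_0$ through a common decomposable supergraph — the paper uses a minimal triangulation $(G\cup G_0)_m$ together with Lemma 2.22 of Lauritzen — and the \emph{fill-in} edges of that triangulation generate factors of their own, not attached to any edge of $E\triangle E_0$. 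For a fill-in edge the $g$-penalty incurred on the way up ($G_0\to(G\cup G_0)_m$) is cancelled by the $g^{-1}$ gain on the way down ($(G\cup G_0)_m\to G$), leaving a noise factor $(1-\hat\rho^2)^{-n/2}$ that on the uniform event is only bounded by $(n\vee p)^{C_1R/2}$ with $C_1$ near $6$; and the number of fill-ins is controlled only by $|(G\cup G_0)_m|\lesssim R^2$, not by a multiple of $k_1+k_2$. Hence these contributions cannot be absorbed into a constant $C_0^{k_1+k_2}$, and your claimed bound $\pi(G\mid\bfX_n)/\pi(G_0\mid\bfX_n)\lesssim C_0^{k_1+k_2}(n\vee p)^{-c_1k_1-c_2k_2}$ does not follow from what you have argued. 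This is exactly why the paper's proof carries the term $\exp\{\tfrac{C_1}{2}R(|(G\cup G_0)_m|-|G_0|)\log(n\vee p)\}$, uses the (B3) signal only \emph{once} per competing graph (a single true edge lost in the move from $G$ to the triangulation), and needs the $R^3$ rate in (B3) so that this single signal dominates the aggregated $O(R^3)\log(n\vee p)$ fill-in noise together with the net prior gain $(n\vee p)^{\alpha R(|G_0|-|G|)}$; your explanation of the $R^3$ (beating $g^{-1}$ per deleted edge and the $\binom{|G_0|}{k_2}$ count) reflects a different and insufficient accounting.

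Two further points in the same step are asserted rather than established. First, the $g$-powers in the marginal likelihood ratio telescope to $(g/(1+g))^{|G|-|G_0|}=(g/(1+g))^{k_1-k_2}$, so a clean ``penalty $g$ for each false edge \emph{and} an independent penalty for each missing edge'' requires choosing the path through the triangulation and pairing each removal's $g^{-1}$ with its own $(1-\hat\rho^2)^{n/2}$ factor; for missing true edges this works only because (B3) holds uniformly over all separators of size $\le 3R$, and you never verify that the separators arising along your path have this size (the paper has to argue this when constructing the triangulation). Second, the numerical claims $c_1>2$ and $c_2>1$ are not checked: per added edge the net exponent is roughly $R\alpha-\tfrac{C_1}{2}R-\tfrac12$ with $C_1$ arbitrarily close to $6$, and when $\xi=0$ (bounded $R$) and $\alpha$ close to $3$ this margin need not exceed $2$ without also invoking the prior terms $C_\tau\log p$ and the binomial ratio $\binom{p(p-1)/2}{|G_0|}/\binom{p(p-1)/2}{|G|}$, which the paper's summation over graph sizes uses explicitly and your counting bound $p^{2k_1}R^{k_2}$ discards. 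So the geometric-series conclusion is plausible in outline but rests on a uniform per-edge factorization that is the actual content of the theorem and has not been supplied.
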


\cite{niu2019bayesian} also obtained the strong graph selection consistency under slightly stronger conditions than those they used to prove the posterior ratio consistency. 
However,  their result holds only when $p = o( n^{1/3})$, which does not include the ultra high-dimensional setting, $p \gg n$.

In Theorem \ref{thm:selection_cons}, we use stronger penalty $g \asymp (n\vee p )^{-R \alpha}$ compared with Theorems \ref{thm:PBF_cons} and \ref{thm:post_ratio}.
Note that, in Theorems \ref{thm:PBF_cons} and \ref{thm:post_ratio}, we only need to focus on $f(\bfX_n \mid G)$ or $\pi(G \mid \bfX_n)$ for a given graph $G$.
However, to prove Theorem \ref{thm:selection_cons}, we should deal with multiple graphs simultaneously; for example, it is required that $\pi \big(G \subsetneq G_0 \mid \bfX_n \big)$ converges to zero in probability as $n\to\infty$, where we need to control multiple graphs, $\{G: G \subsetneq G_0 \}$, simultaneously.
To this end, a strong penalty $g \asymp (n\vee p )^{-R \alpha}$ is required to prove Theorem \ref{thm:selection_cons} using current techniques.

\subsection{Posterior convergence rate for precision matrices}\label{subsec:post_conv}

In this section, we establish the posterior convergence rate for high-dimensional precision matrices under the matrix $\ell_1$-norm using the proposed hierarchical $G$-Wishart prior.
To obtain the posterior convergence rate, we further assume the following condition:  \\

\noindent
{\bf(B4)} 
There exists a constant $\epsilon_0>0$ such that $\epsilon_0 \le \lambda_{\min}(\Omega_0) \le \lambda_{\max} (\Omega_0) \le \epsilon_0^{-1}$,
where $\lambda_{\min}(\Omega_0)$ is the smallest eigenvalue of $\Omega_0$. \\

Condition (B4) is the well-known bounded eigenvalue condition for $\Omega_0$, and similar conditions can be found in \cite{ren2015asymptotic}, \cite{banerjee2015bayesian} and \cite{liu2019empirical}.
Recently, \cite{liu2019empirical} obtained the posterior convergence rate for precision matrices under the Frobenius norm without the beta-min condition like condition (B3).
However, they assumed a moderate high-dimensional setting, $p + |G_0| = o( n /\log p)$.
Theorem \ref{thm:post_conv} shows the posterior convergence rate of the hierarchical $G$-Wishart prior under the matrix $\ell_1$-norm in high-dimensional settings, including $ p \gg n$.

\begin{theorem}[Posterior convergence rate]\label{thm:post_conv}
	Assume that conditions (A1), (A2), (B3), (B4)  and (P2) hold with $C_\beta > 6$ and $\alpha > 3$.
	Then, if $\log p = o(n)$,
	\bean\label{post_conv_l1}
	\bbE_0 \Big\{ \pi \Big( \| \Omega - \Omega_0 \|_1 \ge  M   \tilde{s}_0^2  \sqrt{\frac{\log (n\vee p)}{n}}  \mid \bfX_n \Big) \Big\} &\lra& 0 
	\eean
	as $n\to\infty$ for some constant $M>0$, where  $\tilde{s}_0 := \max_{1 \le j \le p} \sum_{i=1}^p I( \Omega_{0,ij}  \neq 0) $, and  $\bbE_0$ denotes the expectation corresponding to the model \eqref{model} with $\Omega = \Omega_0$.
\end{theorem}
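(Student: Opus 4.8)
The plan is to combine the strong graph selection consistency of Theorem~\ref{thm:selection_cons} with a conditional concentration argument for the $G$-Wishart posterior given the correct graph. Since conditions (A1), (A2), (B3) and (P2) with $C_\beta>6$ and $\alpha>3$ are exactly the hypotheses of Theorem~\ref{thm:selection_cons}, we already know that $\pi(G=G_0\mid\bfX_n)\overset{p}{\lra}1$. Hence for the event in \eqref{post_conv_l1} it suffices to bound the posterior mass on $\{\|\Omega-\Omega_0\|_1\ge M\tilde s_0^2\sqrt{\log(n\vee p)/n}\}$ restricted to $\{G=G_0\}$, i.e.\ to control $\pi(\|\Omega-\Omega_0\|_1\ge M\tilde s_0^2\sqrt{\log(n\vee p)/n}\mid G_0,\bfX_n)$, because the complementary event $\{G\neq G_0\}$ already carries vanishing posterior mass in $\bbP_0$-probability. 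So the heart of the matter is: conditionally on $G_0$, $\Omega\mid G_0,\bfX_n\sim W_{G_0}(n+\nu,\bfX_n^T\bfX_n+A)$ with $A=g\bfX_n^T\bfX_n$ and $g\asymp(n\vee p)^{-R\alpha}$, and we must show this law concentrates around $\Omega_0$ at the stated $\ell_1$ rate.

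The key steps, in order, would be: (i) Reduce the matrix $\ell_1$-norm bound to entrywise control. Using $\|\Omega-\Omega_0\|_1=\max_j\sum_i|\Omega_{ij}-\Omega_{0,ij}|$ and the fact that both $\Omega$ and $\Omega_0$ are supported on $G_0$, each column has at most $\tilde s_0$ nonzero entries, so $\|\Omega-\Omega_0\|_1\le \tilde s_0\max_{(i,j)\in E_0\cup\{(i,i)\}}|\Omega_{ij}-\Omega_{0,ij}|$; it then suffices to get an entrywise rate of order $\tilde s_0\sqrt{\log(n\vee p)/n}$, which after multiplying by $\tilde s_0$ gives the claimed $\tilde s_0^2$ factor. (One may alternatively work clique-by-clique, since the $G$-Wishart draw decomposes over the perfect sequence $P_{0,1},\dots,P_{0,h_0}$ and separators, and $\|\Omega-\Omega_0\|_1$ is controlled by the clique/separator submatrix differences.) (ii) Show that $\bfX_n^T\bfX_n/n$ is close to $\sg_0$ on the relevant low-dimensional submatrices: for every clique $P$ and separator $S$ with $|P|,|S|\le 3R$, $\|(\bfX_n^T\bfX_n/n)_P-\sg_{0,P}\|\lesssim R\sqrt{\log(n\vee p)/n}$ with $\bbP_0$-probability tending to one — this is a standard sub-Gaussian sample-covariance deviation bound on submatrices of dimension $O(R)$, using condition (B4) to bound the relevant eigenvalues and a union bound over the $h_0=O(R)$ cliques. (iii) Use the closed-form clique/separator structure of the $G$-Wishart: conditionally on $G_0$, each clique block $\Omega_{P_{0,l}}$ is (a marginal of) a Wishart with $n+\nu$ degrees of freedom and scale $((1+g)\bfX_n^T\bfX_n+\cdots)_{P_{0,l}}^{-1}$, so its mean is $\asymp n\,((\bfX_n^T\bfX_n)_{P_{0,l}})^{-1}$ and its fluctuation around that mean is of order $\sqrt{|P_{0,l}|/n}\lesssim\sqrt{R/n}$ in spectral norm by Wishart concentration on an $O(R)$-dimensional block; combining with (ii) and $g\to0$, each clique/separator block of $\Omega$ lies within $O(R\sqrt{\log(n\vee p)/n})$ of the corresponding block of $\Omega_0$ in spectral (hence $\ell_1$ on that block) norm. (iv) Reassemble: summing the $O(R)$ clique and separator contributions and absorbing the extra $R$ factors into $\tilde s_0^2$ (using $R\lesssim\tilde s_0$ up to constants, or tracking the powers of $R$ explicitly and noting they are dominated by $\tilde s_0^2$ under (P2)) yields $\|\Omega-\Omega_0\|_1\lesssim \tilde s_0^2\sqrt{\log(n\vee p)/n}$ with posterior probability tending to one, conditionally on $G_0$. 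Finally, integrate against the posterior over $G$ using $\pi(G\neq G_0\mid\bfX_n)\overset{p}{\lra}0$ and dominated convergence to get the unconditional statement \eqref{post_conv_l1}.

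The main obstacle I anticipate is step (iii): turning the closed-form but still somewhat implicit $G$-Wishart clique decomposition into a clean, \emph{uniform-over-cliques} spectral concentration bound for the blocks of $\Omega$ around the blocks of $\Omega_0$, while simultaneously tracking how the $g$-perturbed scale matrix $A=g\bfX_n^T\bfX_n$ (with $g\asymp(n\vee p)^{-R\alpha}$, which is extremely small) affects the mean; one has to be careful that the separators, which are subtracted in the precision reconstruction $\Omega=\sum_l[\Omega_{P_{0,l}}]^0-\sum_l[\Omega_{S_{0,l}}]^0$, do not cause cancellation issues and that the $O(R)$-dimensional Wishart fluctuation bounds hold uniformly via a union bound — this requires that $R\sqrt{\log(n\vee p)/n}\to0$, i.e.\ essentially $R=o(\sqrt{n/\log(n\vee p)})$, which is guaranteed by the $\xi\le 1$ part of condition (P2). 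A secondary subtlety is bookkeeping the exact power of $\tilde s_0$ (and of $R$): one must verify the crude bound $\|\Omega-\Omega_0\|_1\le\tilde s_0\max|\cdot|$ combined with an entrywise rate $\lesssim\tilde s_0\sqrt{\log(n\vee p)/n}$ genuinely produces $\tilde s_0^2$ and not a larger power, which should follow because the entrywise rate inherits only one factor of $\tilde s_0$ (from inverting an $O(\tilde s_0)$-sparse column block) rather than $\tilde s_0^2$.
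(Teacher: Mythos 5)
Your overall architecture is exactly the paper's: reduce to the event $\{G=G_0\}$ via the strong selection consistency of Theorem \ref{thm:selection_cons} (plus dominated convergence), then exploit the decomposition $\Omega=\sum_l\{(\sg_{P_{0,l}})^{-1}\}^0-\sum_l\{(\sg_{S_{0,l}})^{-1}\}^0$ and the fact that, given $G_0$, each clique/separator block $(\sg_{P_{0,l}})^{-1}\mid\bfX_n$ is an explicit Wishart with scale $(1+g)(\bfX_n^T\bfX_n)_{P_{0,l}}$, so the problem becomes spectral concentration of low-dimensional Wishart blocks around their posterior means together with concentration of $(n^{-1}\bfX_n^T\bfX_n)_{P_{0,l}}$ around $\sg_{0,P_{0,l}}$ (the paper does this with Lemmas B.6--B.7 of \cite{lee2018optimal} and a union bound over the at most $p\tilde s_0$ clique--column pairs), with the tiny $g\asymp(n\vee p)^{-R\alpha}$ and the $\nu+|P_{0,l}|$ correction absorbed separately.

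There is, however, a genuine flaw in your bookkeeping in steps (ii)--(iv): you run the block arguments with dimension bound $O(R)$ and then propose to absorb the resulting powers of $R$ into $\tilde s_0^2$ ``using $R\lesssim\tilde s_0$'' or by claiming they are ``dominated by $\tilde s_0^2$ under (P2).'' This inequality goes the wrong way. Under (A1) one only has $\tilde s_0-1\le\max_j\deg_{G_0}(j)\le|G_0|\le R$, and under (P2) $R\asymp\{n/\log(n\vee p)\}^{\xi/3}$ may grow while $\tilde s_0$ stays bounded; so a bound of order $R^{a}\sqrt{\log(n\vee p)/n}$ does not yield the claimed $\tilde s_0^{2}\sqrt{\log(n\vee p)/n}$. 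The correct accounting, which is how the paper gets the $\tilde s_0^2$ factor, uses only properties of $G_0$ itself: every clique (hence separator) of $G_0$ has cardinality at most $\tilde s_0$, because all pairs inside a clique are edges of $G_0$ and thus appear as nonzero entries in a single column of $\Omega_0$; and each vertex $j$ lies in at most $w_j\le\tilde s_0$ cliques of the perfect ordering. Then the per-column bound $\sum_{l\le w_j}\|\{(\sg_{P^{(j)}_{0,l}})^{-1}-(\sg_{0,P^{(j)}_{0,l}})^{-1}\}\|_1\le \tilde s_0\max_l\sqrt{|P^{(j)}_{0,l}|}\,\|(\sg_{P^{(j)}_{0,l}})^{-1}-(\sg_{0,P^{(j)}_{0,l}})^{-1}\|$ combined with a spectral deviation of order $\sqrt{|P^{(j)}_{0,l}|\log(n\vee p)/n}$ gives $\tilde s_0\,|P^{(j)}_{0,l}|\sqrt{\log(n\vee p)/n}\le\tilde s_0^2\sqrt{\log(n\vee p)/n}$, with $R$ never entering the rate. (Your spectral bound ``$\lesssim R\sqrt{\log(n\vee p)/n}$'' in step (ii) is similarly too loose; the dimension should enter as $\sqrt{|P|}$, not linearly, and should be $\tilde s_0$, not $R$.) Once you replace the $R$-based dimension counts by these $\tilde s_0$-based ones, the rest of your outline matches the paper's proof.
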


Using the $G$-Wishart prior, \cite{xiang2015high} obtained a larger posterior convergence rate, $\tilde{s}_0^{5/2} \{ \log(n\vee p)/n \}^{1/2}$, for a precision matrix $\Omega_0 \in \calP_{G_0}$, where $G_0$ is decomposable and known.
\cite{banerjee2014posterior} derived the same posterior convergence rate for banded precision matrices.
It was unclear whether the posterior convergence rate $\tilde{s}_0^{5/2} \{ \log(n\vee p)/n \}^{1/2}$ using the $G$-Wishart prior can be improved or not.
Our result reveals that this rate can be improved even when the true graph $G_0$ is unknown.

When a point estimation of precision matrices is of interest, one might want to use a consistent Bayes estimator.
However, in general, a posterior convergence rate result does not imply the consistency of the Bayes estimator without further conditions.
In the following theorem, we show the conditional posterior mean, $\mathbb{E}^\pi (\Omega \mid \widehat{G}, \mathbf{X}_n)$, is a consistent estimator, and its convergence rate under the matrix $\ell_1$-norm coincides with the posterior convergence rate in Theorem \ref{thm:post_conv}.
Note that the closed form of $\mathbb{E}^\pi (\Omega \mid \hat{G}, \mathbf{X}_n)$ is available because the posterior mode $\widehat{G}$ is decomposable.

\begin{theorem}[Consistency of Bayes estimator]\label{thm:cons_BE}
	Under the same conditions in Theorem \ref{thm:post_conv}, we have
	\bea
	\bbP_0 \bigg(  \, \big\|  \mathbb{E}^\pi (\Omega \mid \widehat{G}, \mathbf{X}_n) - \Omega_0 \big\|_1  \ge M \tilde{s}_0^2 \sqrt{ \frac{\log (n\vee p)}{n}}   \,\, \bigg)   &\lra& 0 
	\eea
	as $n \to\infty$ for some constant $M>0$.
\end{theorem}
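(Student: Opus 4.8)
The plan is to combine the strong graph selection consistency (Theorem~\ref{thm:selection_cons}) with the conditional posterior convergence rate established inside the proof of Theorem~\ref{thm:post_conv}. On the event $\{\what{G} = G_0\}$, which has probability tending to one, the conditional posterior $\pi(\Omega \mid \what{G}, \bfX_n)$ equals $\pi(\Omega \mid G_0, \bfX_n)$, so the conditional posterior mean $\bbE^\pi(\Omega \mid \what{G}, \bfX_n)$ equals $\bbE^\pi(\Omega \mid G_0, \bfX_n)$, which is the $W_{G_0}(n+\nu, \bfX_n^T\bfX_n + A)$-mean and has a closed form because $G_0$ is decomposable. Thus it suffices to show that $\|\bbE^\pi(\Omega\mid G_0,\bfX_n) - \Omega_0\|_1 \le M\tilde{s}_0^2\sqrt{\log(n\vee p)/n}$ with probability tending to one, and then a union bound over the two events $\{\what{G}=G_0\}$ and $\{\|\bbE^\pi(\Omega\mid G_0,\bfX_n)-\Omega_0\|_1 \le M\tilde s_0^2\sqrt{\log(n\vee p)/n}\}$ finishes the argument.

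First I would invoke Theorem~\ref{thm:selection_cons}, whose hypotheses coincide with those assumed here, to get $\bbP_0(\what{G} \neq G_0) \to 0$. Next I would analyze the Bayes estimator under the fixed decomposable graph $G_0$. The conditional posterior mean of a $G_0$-Wishart $W_{G_0}(n+\nu, \bfX_n^T\bfX_n+A)$ distribution admits the explicit clique-separator formula: writing $A = g\bfX_n^T\bfX_n$ so that $\bfX_n^T\bfX_n + A = (1+g)\bfX_n^T\bfX_n$, the posterior mean is a scalar multiple of the $G_0$-projection of $\big((1+g)\bfX_n^T\bfX_n\big)^{-1}$ built from the submatrices indexed by the cliques $P_{0,l}$ and separators $S_{0,l}$. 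Since $g \asymp (n\vee p)^{-R\alpha} = o(1)$ and the multiplicative constant is $(n+\nu-|P_{0,l}|-1)$ or $(n+\nu-|S_{0,l}|-1)$ up to the $1/(n+\nu)$-type normalization, these factors are all $1 + O(\tilde s_0/n)$ and can be absorbed. The core estimate is then a concentration bound: each clique/separator block of the sample covariance $(1/n)\bfX_n^T\bfX_n$ concentrates around the corresponding block of $\sg_0$ at rate $\sqrt{\log(n\vee p)/n}$ (sub-Gaussian tail bounds for Wishart submatrices, using condition (B4) to bound eigenvalues), and inverting small blocks preserves this rate up to constants depending on $\epsilon_0$. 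Summing the clique and separator contributions reassembles $\Omega_0$ (by the decomposition $\Omega_0 = \sum_l [\sg_{0,P_{0,l}}^{-1}]^V - \sum_l [\sg_{0,S_{0,l}}^{-1}]^V$), and propagating errors through the matrix $\ell_1$-norm — where each row has at most $\tilde s_0$ nonzero entries and the block sizes are controlled by $\tilde s_0$ — yields the $\tilde s_0^2\sqrt{\log(n\vee p)/n}$ rate, exactly as in Theorem~\ref{thm:post_conv}. Much of this computation already appears in the proof of Theorem~\ref{thm:post_conv}, so I would cite it rather than redo it.

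The main obstacle is handling the $\ell_1$-operator norm rather than an entrywise or Frobenius bound: the naive bound $\|A\|_1 \le \sqrt{p}\,\|A\|_F$ is far too lossy in the ultra-high-dimensional regime, so one must exploit sparsity row by row. Concretely, for the clique-separator reconstruction one shows that the error matrix $\bbE^\pi(\Omega\mid G_0,\bfX_n) - \Omega_0$ has support contained in $E_0 \cup \{\text{diagonal}\}$ (since both terms lie in $\calP_{G_0}$), hence at most $\tilde s_0$ nonzeros per column; combined with an entrywise error of order $\tilde s_0\sqrt{\log(n\vee p)/n}$ coming from inverting blocks of dimension up to $\tilde s_0$, the column sums are of order $\tilde s_0^2\sqrt{\log(n\vee p)/n}$. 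Keeping the block-inversion perturbation bounds uniform over all cliques and separators (a union bound over at most $p$ such sets, absorbed into the $\log(n\vee p)$ factor) and controlling the scalar normalizing constants of the $G_0$-Wishart without letting the $\tilde s_0/n$ corrections accumulate into something larger than the target rate is the delicate bookkeeping; everything else is a routine consequence of sub-Gaussian concentration and the bounded-eigenvalue assumption (B4).
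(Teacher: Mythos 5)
Your proposal is correct and follows essentially the same route as the paper: split on the event $\{\what{G}=G_0\}$, use the closed-form clique--separator expression for $\bbE^\pi(\Omega\mid G_0,\bfX_n)$, and recycle the block-wise spectral-norm concentration bounds already established in the proof of Theorem \ref{thm:post_conv} to get the $\tilde{s}_0^2\sqrt{\log(n\vee p)/n}$ rate under the matrix $\ell_1$-norm. The only cosmetic difference is that you obtain $\bbP_0(\what{G}\neq G_0)\to 0$ from the strong selection consistency of Theorem \ref{thm:selection_cons}, whereas the paper cites the posterior ratio consistency of Theorem \ref{thm:post_ratio}; both are available under the stated hypotheses.
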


\section{Simulation Studies}\label{sec:simul}

\subsection{Simulation I: Illustration of posterior ratio consistency}

In this section, we illustrate the posterior ratio consistency results in Theorem \ref{thm:post_ratio} using a simulation experiment. 
First note that for a complete graph $G$, the explicit expression of the normalizing constant in the $G$-Wishart prior is given by
\bean \label{normalizing_complete}
I_G(\nu, A) = \frac{   2^{(\nu + p -1)p/2}  \pi^{p(p-1)/4}\prod_{i=0}^{p-1}\Gamma\left(\frac{\nu + p -1-i}{2}\right)}{\left\{ \det(A) \right\}^{\frac{\nu + p -1}{2}}}.
\eean
As shown in \cite{roverato2000cholesky} and \cite{banerjee2014posterior}, for any decomposable graph $G$ with the set of cliques $\{C_1, \ldots, C_h\}$ and the set of separators $\{S_2, \ldots, S_h\}$, the following holds:
\bean \label{normalizing_decom}
I_G(\nu, A) = \frac{\prod_{j = 1}^h I_{C_j}\left(\nu, A_{C_j}\right)}{\prod_{j = 2}^h I_{S_j}\left(\nu, A_{S_j}\right)},
\eean
where $A_{C_j}$ denotes the submatrix of $A$ formed by its columns and rows of indexed in $C_j$.
Note that $I_{C_j}(\cdot,\cdot)$ and $I_{S_j}(\cdot, \cdot)$ can be computed using \eqref{normalizing_complete} because $C_j$ and $S_j$ are complete for any decomposable graph $G$. 
Further note that the explicit form of the marginal likelihood is given by
\bea
f( \bfX_n \mid G) 
&=& (2\pi)^{- n p/2} \frac{I_{G} \left(n+\nu, \bfX_n^T \bfX_n+ A\right)}{I_{G} (\nu, A)}.
\eea
It then follows from \eqref{normalizing_decom} that for any decomposable graph $G$, we have
\bean \label{marginal_likelihood}
f( \bfX_n \mid G)  = (2\pi)^{- n p/2} \frac{\prod_{j = 1}^h I_{C_j}\left(n+\nu, (\bfX_n^T \bfX_n+ A)_{C_j}\right)}{\prod_{j = 2}^h I_{S_j}\left(n+\nu, (\bfX_n^T \bfX_n+ A)_{S_j}\right)}
\frac{\prod_{j = 2}^h I_{S_j}\left(\nu, A_{S_j}\right)}{\prod_{j = 1}^h I_{C_j}\left(\nu, A_{C_j}\right)}  . 
\eean
Therefore, we can use \eqref{marginal_likelihood} and prior \eqref{prior_G} to compute the posterior ratio between any two decomposable graphs. 

Next, we consider seven different values of $p$ ranging from $50$ to $350$, and fix $n = 150$. Then, for each fixed $p$, we construct a $p \times p$ covariance matrix $\Sigma_{0,ij} = 0.5^{|i-j|}$ for $1 \le i,j\le p$ such that the inverse covariance matrix $\Omega_0 =\sg_0^{-1}$ will possess a banded structure, i.e., the so-called AR(1) model. The matrix $\Omega_0$ also gives us the structure of the true underlying graph $G_0$. Next, we generate $n$ random samples from  $N_p(0, \Sigma_0)$  to construct our data matrix $\bfX_n$, and set the 
hyperparameters as  $A = 0.1 \delta^{-1} p^{-2.5-\delta}\bfX_n^T\bfX_n$, $\delta = 0.01$, $\nu = 3$ and $C_\tau = 0.5$.
The above process ensures all the assumptions in our Theorem \ref{thm:post_ratio} are satisfied. 
We then examine the posterior ratio under four different cases by computing the log of posterior ratio of a ``non-true" decomposable graph $G$ and $G_0$, $\log \{\pi(G\mid \bfX_n)/\pi(G_0 \mid \bfX_n)\}$, as follows. 
\begin{enumerate}
	\item Case 1: $G$ is a supergraph of $G_0$ and the number of total edges of 
	$G$ is exactly twice of $G_0$, i.e. $|G| = 2 |G_0|$. 
	\item Case 2: $G$ is a subgraph of $G_0$ and the number of total edges of 
	$G$ is exactly half of $G_0$, i.e. $|G| = \frac 1 2 
	|G_0|$. 
	\item Case 3: $G$ is not necessarily a supergraph of $G_0$, but the number 
	of total edges of $G$ is twice of $|G_0|$. 
	\item Case 4: $G$ is not necessarily a subgraph of $G_0$, but the number 
	of total edges of $G$ is half of $|G_0|$. 
\end{enumerate}
\begin{figure}[!tb]
	\centering
	\begin{subfigure}
		{\includegraphics[width=60mm]{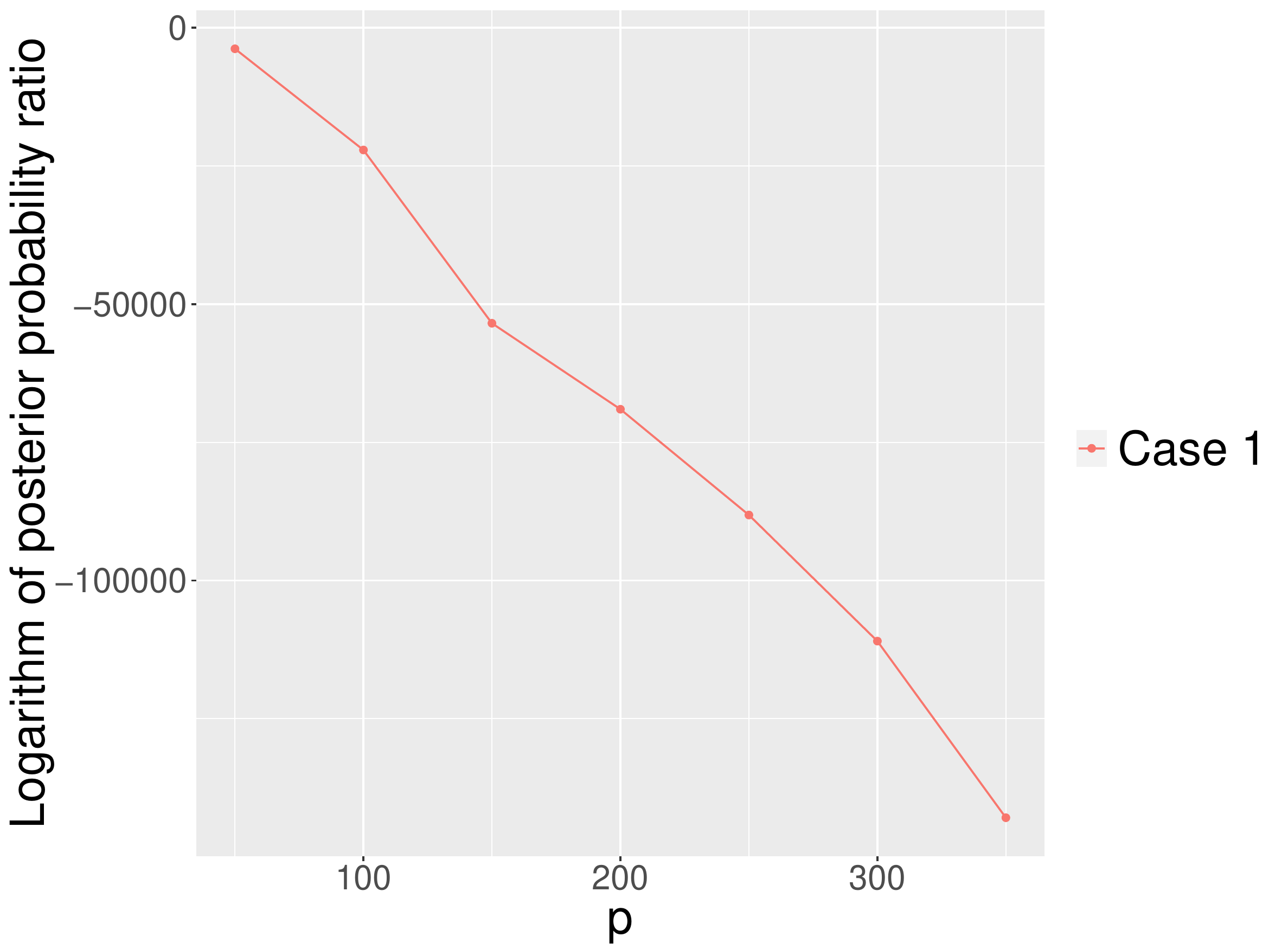}}
	\end{subfigure}
	\qquad
	\begin{subfigure}
		{\includegraphics[width=60mm]{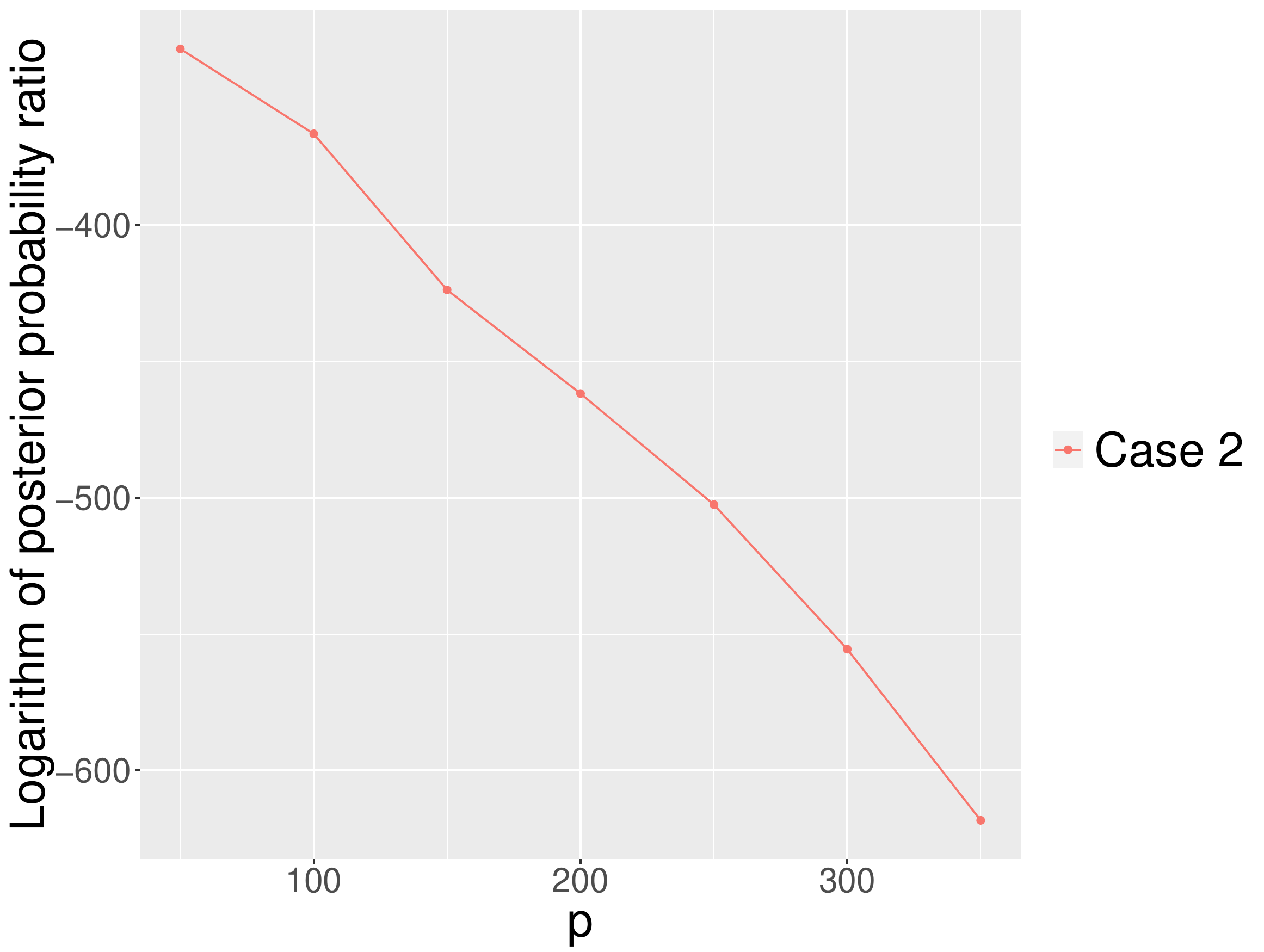}}
	\end{subfigure}

	\begin{subfigure}
		{\includegraphics[width=60mm]{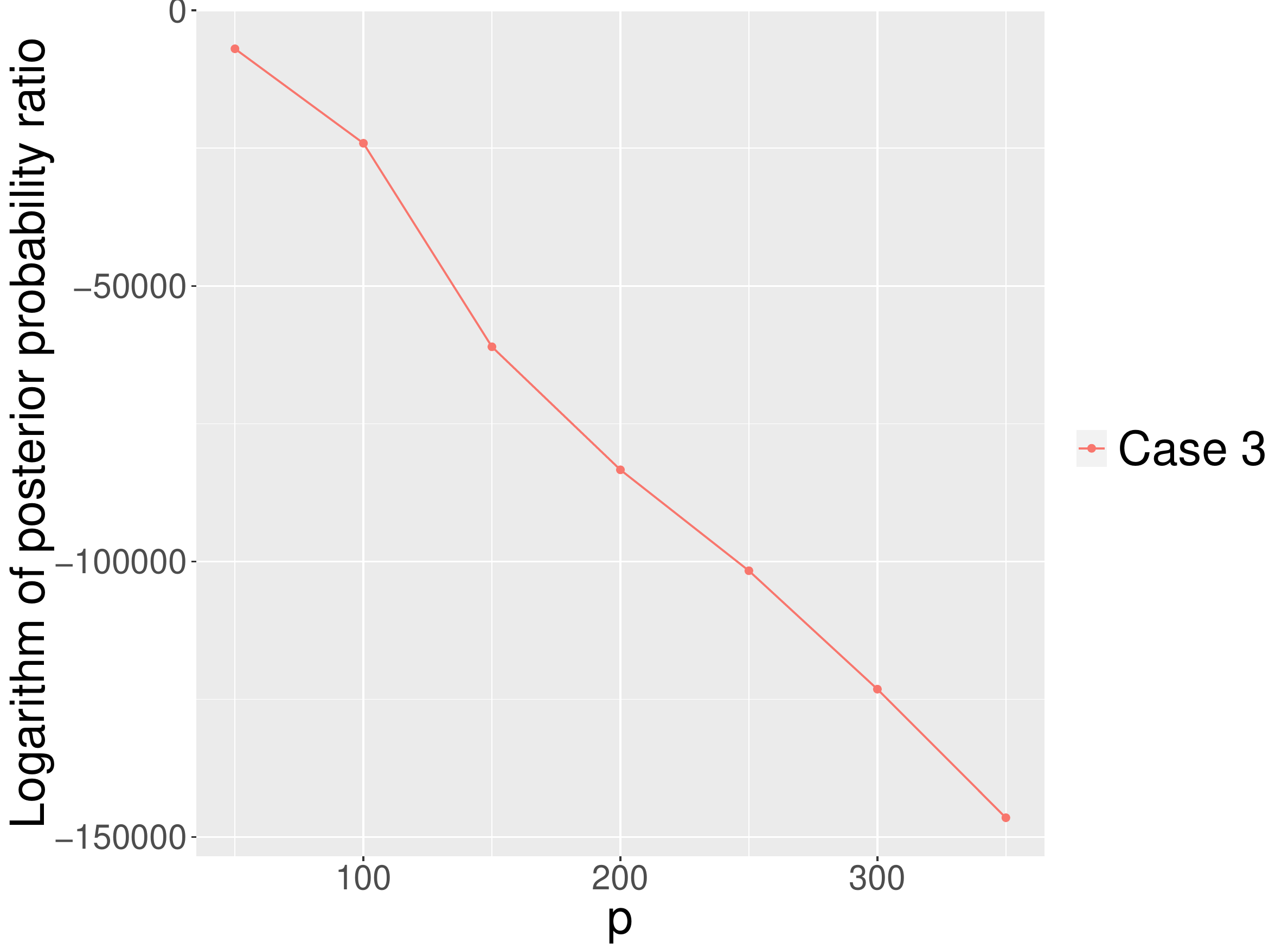}}
	\end{subfigure}
	\qquad
	\begin{subfigure}
		{\includegraphics[width=60mm]{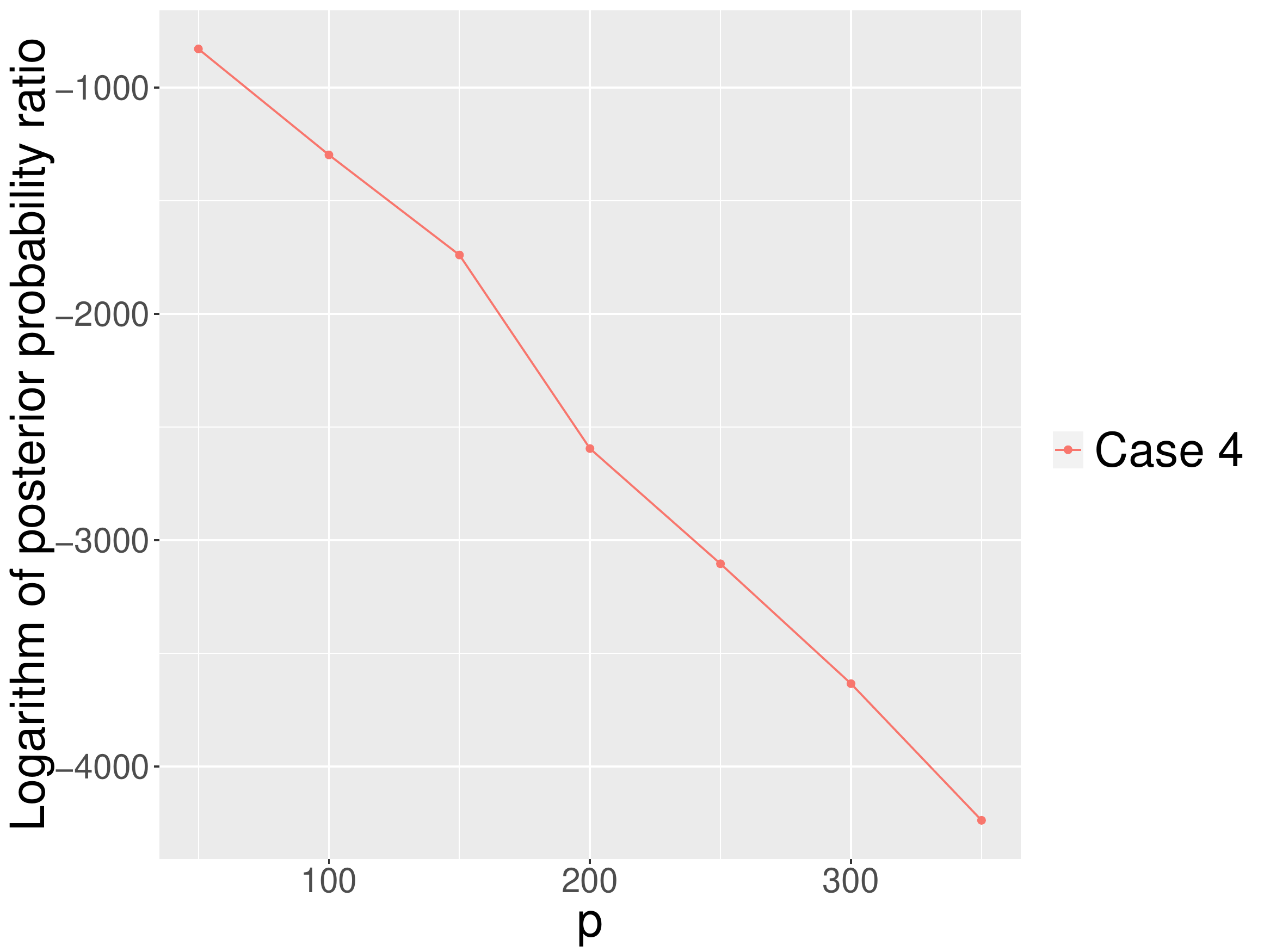}}
	\end{subfigure}

	\caption{Logarithm of posterior probability ratio for $G$ and $G_0$ for various choices of the ``non-true" graph $G$.}
	\label{posterior_ratio_plot}
\end{figure}

\noindent
The logarithms of the posterior ratio for various cases are provided in Figure \ref{posterior_ratio_plot}. 
As expected in all four cases, the logarithm of the posterior ratio decreases as $p$ becomes large. 
Based on the proof of Theorem \ref{thm:post_ratio}, we can see that the posterior ratio $\pi(G\mid \mathbf{X}_n) / \pi (G_0 \mid \mathbf{X}_n)$ converges in probability to zero as $(n\vee p)\to\infty$. 
Thus, this result provides a numerical illustration of Theorem \ref{thm:post_ratio}.

\subsection{Simulation II: Illustration of graph selection} \label{sec:illustration:graph:selection}
In this section, we perform the graph selection procedure under the proposed hierarchical $G$-Wishart prior and evaluate its performance along with other competing methods. Recall that the marginal posterior for $G$ is given by
\bea
\pi(G \mid \bfX_n )
&\propto& f(\bfX_n \mid G) \pi(G)\\
&\propto&  \frac{I_{G} (n+\nu, \bfX_n^T \bfX_n+ A)}{I_{G} (\nu, A)} \pi(G)
\eea
and available up to some unknown normalizing constant. We thereby suggest using the following MH algorithm for posterior inference: 
\begin{enumerate}
	\item Set the initial value $G^{(1)}$. 
	\item For each $s=2,\ldots, S$,  
	\begin{enumerate}
		\item sample $G^{new} \sim q(\cdot \mid G^{(s-1)})$ until $G^{new}$ is decomposable;
		\item set $G^{(s)}=G^{new}$ with the probability 
		\bea
		p_{acc} &=& \min \left\{ 1,  \frac{\pi(G^{new} \mid \bfX_n )}{\pi(G^{(s-1)} \mid \bfX_n )}  \frac{q(G^{(s-1)} \mid G^{new}) }{q(G^{new} \mid G^{(s-1)})}  \right\},
		\eea
		otherwise set $G^{(s)}= G^{(s-1)}$.
	\end{enumerate}
\end{enumerate}
In the above Step 2(a), we verify whether the resulting graph from local perturbations of the current graph is still decomposable by accepting only those moves that satisfy two conditions outlined in \cite{green2013juction} on the junction tree representation of the proposed graph. The proposal kernel $q(\cdot \mid G')$ is chosen such that a new graph $G^{new}$ is sampled by changing a randomly chosen nonzero entry in the lower triangular part of the adjacency matrix for $G'$ to $0$ with probability $0.5$ or by changing a randomly chosen zero entry to $1$ randomly with probability $0.5$. 
We will refer to our proposed method as the MCMC-based graph selection with hierarchical $G$-Wishart distribution (HGW-M).

Following the simulation settings in \cite{yuan2007ggm} and \cite{Friedman2007glasso}, we consider five different structures of the true graph, which corresponds to the following sparsity patterns of the true inverse covariance matrix with all the unit diagonals.
\begin{enumerate} 
	\item Setting 1: AR(1) model with ${\Omega}_{i,i-1} = {\Omega}_{i-1,i} = 0.5$ for $1 \le i \le p-1$.
	\item Setting 2: AR(2) model with ${\Omega}_{i,i-1} = {\Omega}_{i-1,i} = 0.5$ for $1 \le i \le p-1$ and  ${\Omega}_{i,i-2} = {\Omega}_{i-2,i} = 0.25$ for $1 \le i \le p-2$.
	\item Setting 3: AR(4) model with ${\Omega}_{i,i-1} = {\Omega}_{i-1,i} = 0.4$ for $1 \le i \le p-1$, ${\Omega}_{i,i-2} = {\Omega}_{i-2,i} = 0.2$ for $1 \le i \le p-2$, ${\Omega}_{i,i-3} = {\Omega}_{i-3,i} = 0.2$ for $1 \le i \le p-3$, and ${\Omega}_{i,i-4} = {\Omega}_{i-4,i} = 0.1$ for $1 \le i \le p-4$.
	\item Setting 4: Star model where every node connects to the first node, with $\Omega_{1,i} = \Omega_{i,1} = 0.2$ for $2 \le i \le p$, and the remaining entries except the diagonals are set to 0.
	\item Setting 5: Circle model with ${\Omega}_{i,i-1} = {\Omega}_{i-1,i} = 0.5$ for $1 \le i \le p-1$, $\Omega_{1,p} = \Omega_{p,1} = 0.4$, and the remaining entries except the diagonals are set to 0.
\end{enumerate}
For each model, we consider two different values of $p = 100$ or 200, and fix $n = 100$. Next, under each combination of the true precision matrix and the dimension, we generate $n$ observations from $N_p(0, \Sigma_0)$. The hyperparameters for HGW were set at as  $A = (0.1\delta)^{-1} p^{-2.5-\delta}\bfX_n^T\bfX_n$, $\delta = 0.001$, $\nu = 3$ and $C_\tau = 0.5$.
The initial state for $G$ was chosen using the graphical lasso (GLasso) \citep{Friedman2007glasso}.
For posterior inference, we draw $3,000$ posterior samples with a burn-in period of $3,000$ and collect the indices with posterior inclusion probability larger than $0.5$. Therefore, the final estimate using HGW-M can be regarded as the median probability model graph structure. 

To compare the selection performance between the median probability model and the posterior mode, we adopt the hybrid graph selection procedure in \cite{cao2019posterior} to navigate through the massive posterior space. For all the penalized likelihood methods \citep{Friedman2007glasso,cai2011constrained,yuan2007ggm},
a user-specified penalty parameter controls the level of sparsity of the resulting estimator. Varying 
values of the penalty parameter provide a range of possible graphs to choose from. This set 
of graphs is referred to as the solution path. The choice of the penalty parameter is typically 
made by assigning a BIC-like score to each graph on the solution path, and choosing the graph with the highest score \citep{cao2019posterior}.
For the Bayesian approach, the posterior probabilities naturally assign a score for 
all the decomposable graph, but the entire graph space is prohibitively large to search 
in high-dimensional settings. To address this, in the context of Gaussian directed acyclic graphical models, \cite{ben2015high} and \cite{cao2019posterior} develop a 
computationally feasible approach which searches around the graphs on the penalized likelihood 
solution path, and demonstrate that significant improvement in 
accuracy can be obtained by searching beyond the penalized likelihood solution paths using 
posterior probabilities. Adapted to our setting, we first vary the tuning parameter in GLasso on a grid from 0.01 to 1.5. For each fixed parameter, we further threshold the inverse covariance matrix estimated by GLasso on a grid from 0 to 0.5 to get a sequence of $5,000$ graphs, and include them in the candidate set. 
We use the same technique in Section \ref{sec:illustration:graph:selection} to ensure the candidate graphs are decomposable. 
The log posterior probabilities are computed for all candidate graphs, and the one with the highest probability is retained. The shotgun stochastic search is implemented to search around the selected graph and to target the posterior mode $\hat G$ \citep{jones2005experiments}. We refer to this hybrid graph selection approach as HGW-$\hat G$.

The performance of HGW-M and HGW-$\hat G$ will be compared with other existing methods including the GLasso \citep{Friedman2007glasso}, the constrained $\ell_1$-minimization for inverse matrix estimation (CLIME) \citep{cai2011constrained} and the tuning-insensitive approach for optimally estimating Gaussian graphical models (TIGER) \citep{liu2017tiger}. 	
The tuning parameters for GLasso and TIGER were chosen by the criterion of StARS, the stability-based method for choosing the regularization parameter in high dimensional inference for undirected graphs \citep{Liu2010STARS}. The penalty parameter for CLIME was selected by 10-fold cross-validation. For GLasso and TIGER, the final model is determined by collecting the nonzero entries in the estimated precision matrix. Since CLIME could not produce exact zeros in our simulation settings, we constructed the final support by thresholding the absolute values of the estimated precision matrix at 0.025.

To evaluate the performance of variable selection, the precision, sensitivity, specificity and Matthews correlation coefficient (MCC) are reported at Tables \ref{table:comp1} to \ref{table:comp4}, where each simulation setting is repeated for 20 times. 
The criteria are defined as
\bea
\text{Precision} &=&  \frac{\mbox{TP}}{\mbox{TP}+\mbox{FP}}, \\
\text{Sensitivitiy}  &=&     \frac{\mbox{TP}}{\mbox{TP}+\mbox{FN}},   \\
\text{Specificity}  &=&    \frac{\mbox{TN}}{\mbox{TN}+\mbox{FP}},  \\
\text{MCC}  &=&     \frac{\mbox{TP} \times \mbox{TN} - \mbox{FP}\times \mbox{FN}}{\sqrt{(\mbox{TP}+\mbox{FP})(\mbox{TP}+\mbox{FN})(\mbox{TN}+\mbox{FP})(\mbox{TP}+\mbox{FN})}}  ,
\eea
where TP, TN, FP and FN are true positive, true negative, false positive and false negative, respectively. For a clear visualization, in Figure \ref{graph_selection_plot}, we plot the heatmaps for comparing the sparsity structure of the precision matrix estimated by different methods under the AR(1) setting and $p  = 100$.
\begin{figure}[!tb]
	\centering
	\begin{subfigure}
		{\includegraphics[width=27mm]{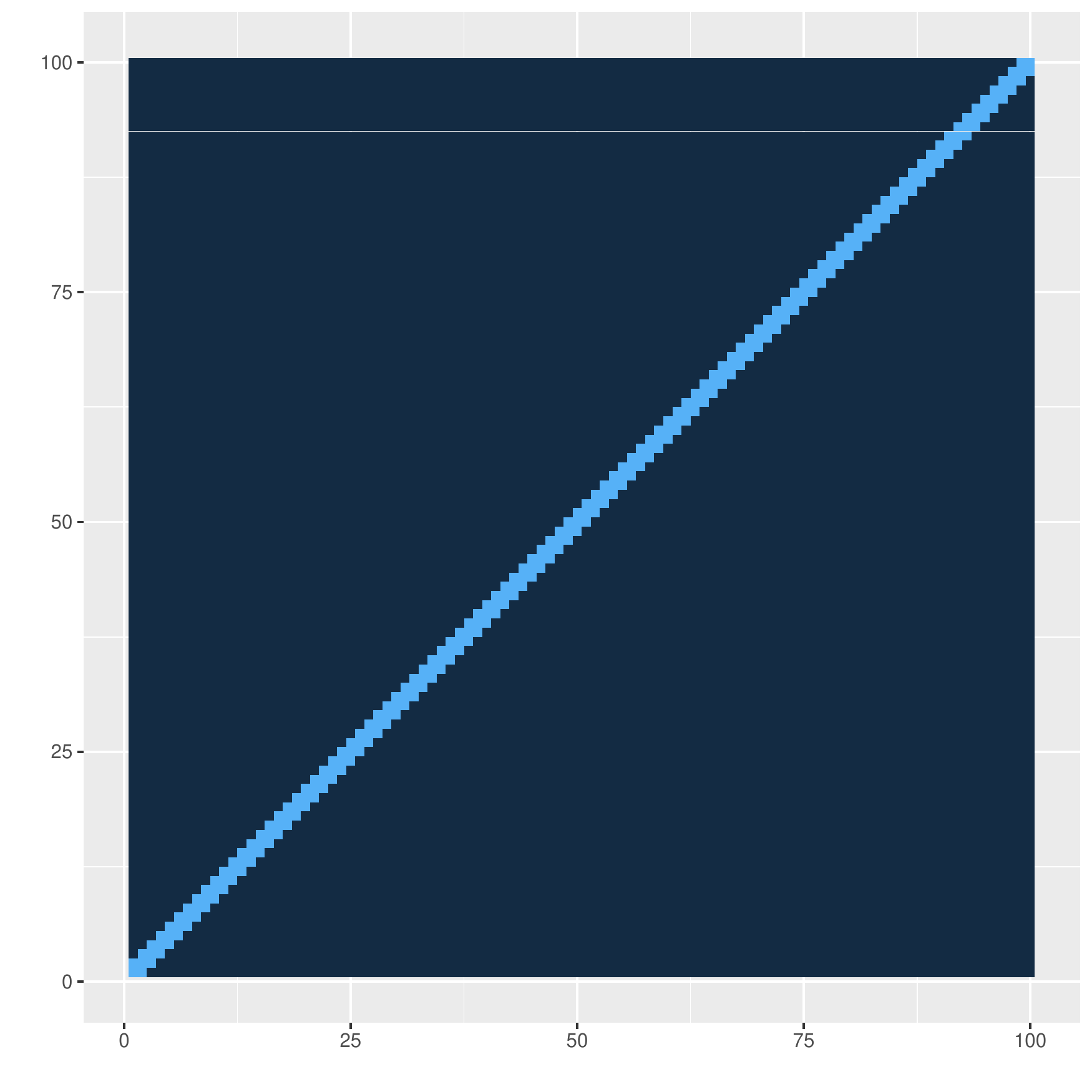}}
	\end{subfigure}
	\begin{subfigure}
		{\includegraphics[width=27mm]{true_graph.pdf}}
	\end{subfigure}	
	\begin{subfigure}
		{\includegraphics[width=27mm]{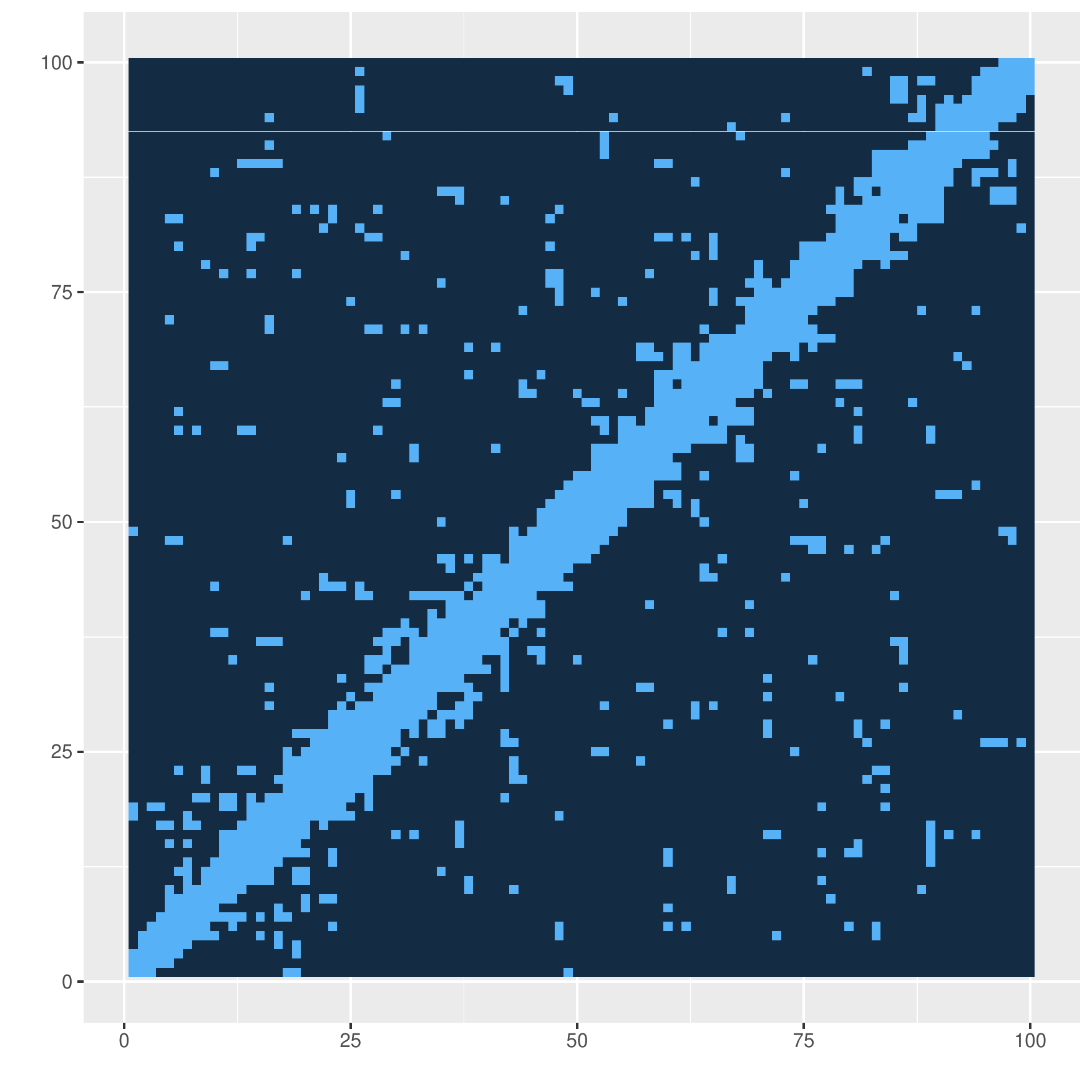}}
	\end{subfigure}
	\begin{subfigure}
		{\includegraphics[width=27mm]{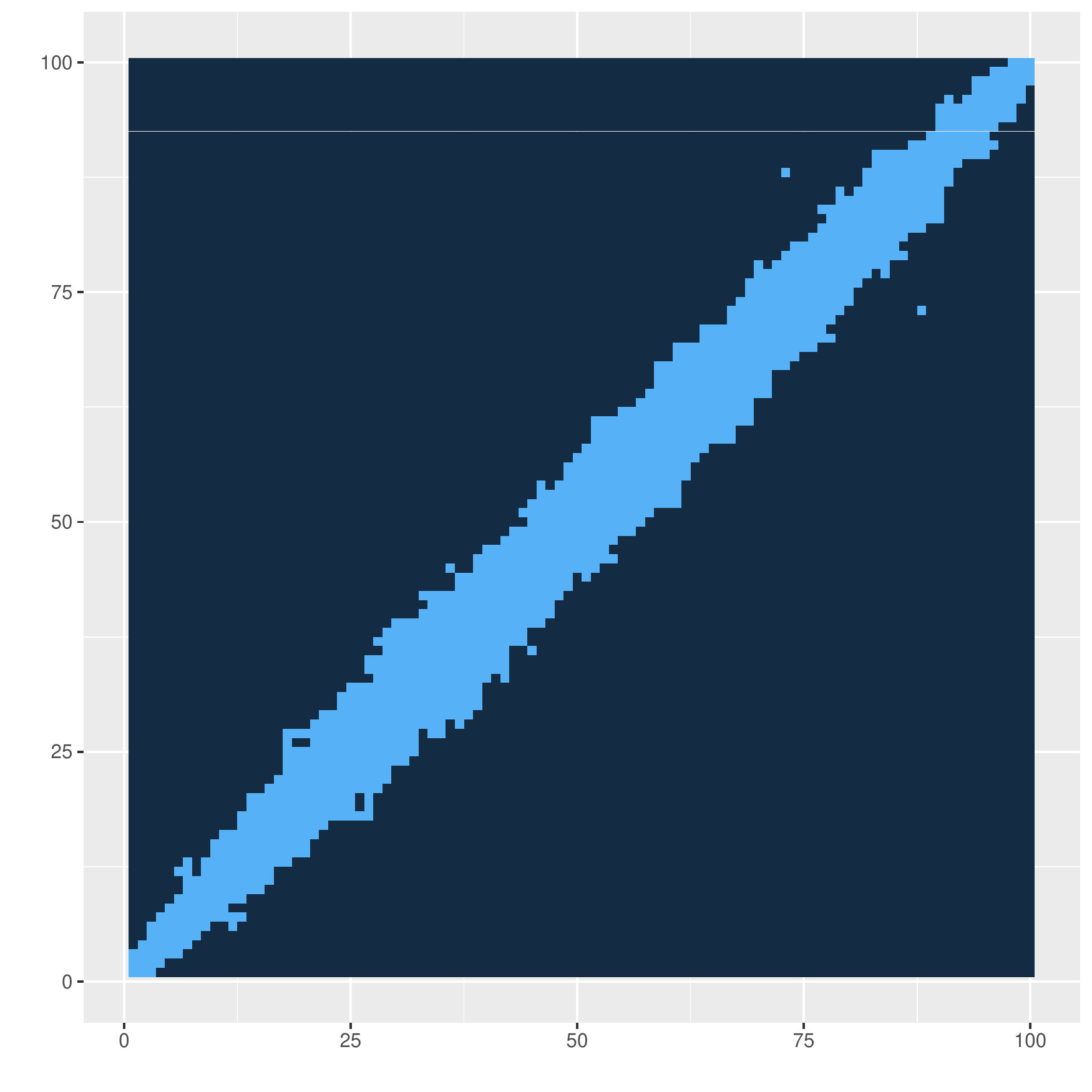}}
	\end{subfigure}
	\begin{subfigure}
		{\includegraphics[width=27mm]{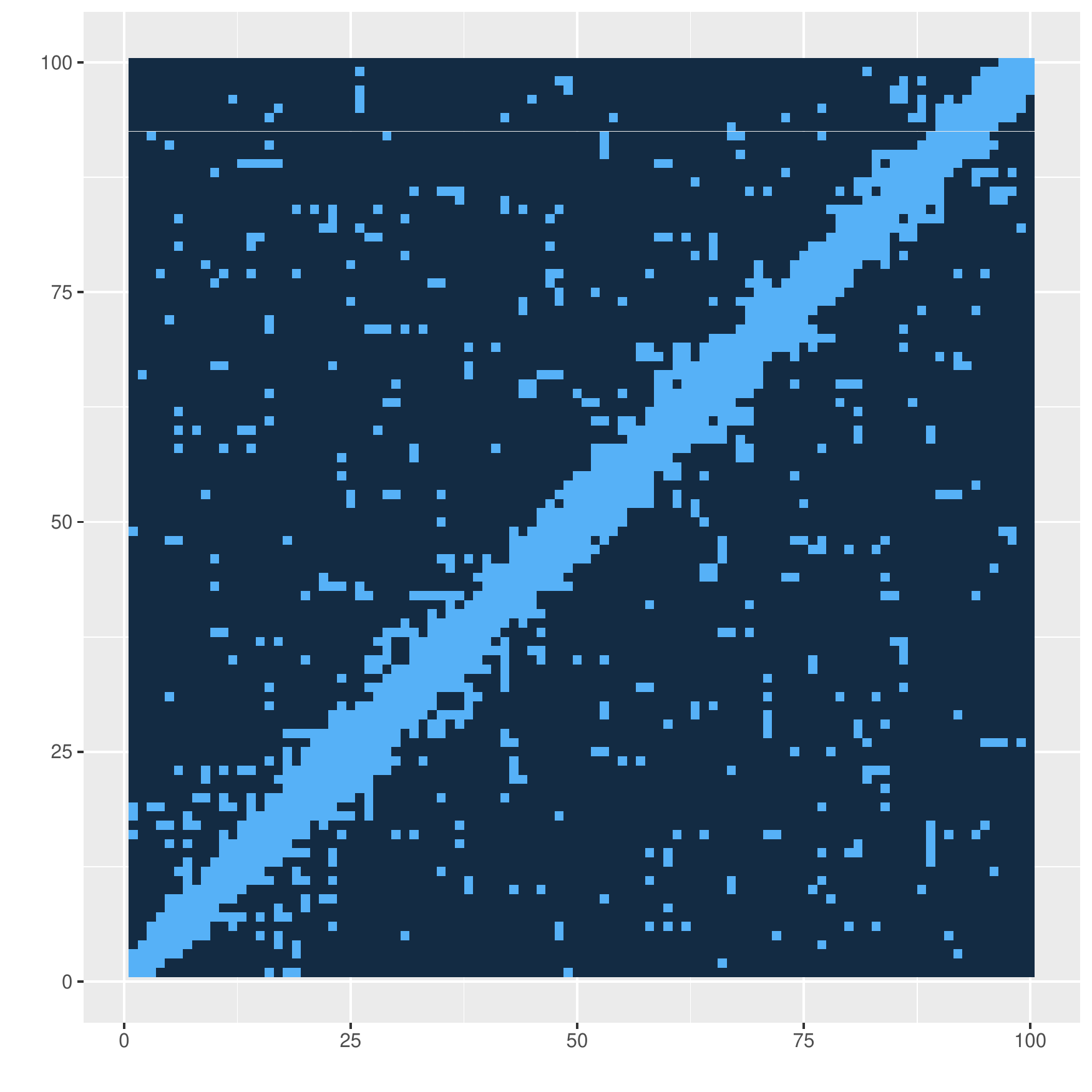}}
	\end{subfigure}
	\caption{Heatmap comparison of the sparsity structure estimated by different methods under the AR(1) setting. Left to right: HGW-M, HGW-$\hat G$, GLasso, CLIME, TIGER.}
	\label{graph_selection_plot}
\end{figure}

\begin{table}[!tb] 
	\centering
	\scalebox{1}{
		\begin{tabular}{ccccccc}
			\hline
			Setting & $p$   & Method & Precision & Sensitivity & Specificity & MCC  \\ \hline
			\multirow{5}{*}{AR(1)} & \multirow{5}{*}{100} & HGW-M  & 1         & 1           & 1           & 1    \\
			&     & HGW-$\hat G$  & 1         & 1           & 1           & 1    \\
			&     & GLasso & 0.15      & 1           & 0.89        & 0.37 \\
			&     & CLIME  & 0.17      & 1           & 0.90         & 0.40  \\
			&     & TIGER  & 0.14      & 1           & 0.88        & 0.36 \\ \hline
			\multirow{5}{*}{AR(1)} & \multirow{5}{*}{200} & HGW-M  & 1         & 1           & 1           & 1    \\
			&     & HGW-$\hat G$  & 0.99      & 1           & 1           & 0.99 \\
			&     & GLasso & 0.12      & 0.99        & 0.93        & 0.33 \\
			&     & CLIME  & 0.14      & 1           & 0.94        & 0.37 \\
			&     & TIGER  & 0.11      & 1           & 0.91        & 0.31\\
			\hline
		\end{tabular}
	}
	\caption{The summary statistics for graph selection under the AR(1) setting with various dimensions are reported for each method.}\label{table:comp1}
\end{table} 

\begin{table}[!tb] 
	\centering
	\scalebox{1}{
		\begin{tabular}{ccccccc}
			\hline
			Setting & $p$   & Method & Precision & Sensitivity & Specificity & MCC  \\ \hline
			\multirow{5}{*}{AR(2)} & \multirow{5}{*}{100} & HGW-M      & 0.94 & 0.57 & 1    & 0.73 \\
			&     & HGW-$\hat G$ & 0.98 & 0.57 & 1    & 0.74 \\
			&     & GLasso   & 0.24 & 0.72 & 0.91 & 0.38 \\
			&     & CLIME    & 0.27 & 0.86 & 0.91 & 0.45 \\
			&     & TIGER    & 0.23 & 0.73 & 0.90  & 0.36 \\ \hline
			\multirow{5}{*}{AR(2)} & \multirow{5}{*}{200} & HGW-M      & 0.91 & 0.49 & 1    & 0.66 \\
			&     & HGW-$\hat G$ & 0.96 & 0.45 & 1    & 0.65 \\
			&     & GLasso   & 0.19 & 0.72 & 0.94 & 0.35 \\
			&     & CLIME    & 0.12 & 0.82 & 0.88 & 0.29 \\
			&     & TIGER    & 0.16 & 0.75 & 0.92 & 0.32 \\
			\hline
		\end{tabular}
	}
	\caption{The summary statistics for graph selection under the AR(2) setting with various dimensions are reported for each method.} \label{table:comp2}
\end{table}

\begin{table}[!tb] 
	\centering
	\scalebox{1}{
		\begin{tabular}{ccccccc}
			\hline
			Setting & $p$   & Method & Precision & Sensitivity & Specificity & MCC  \\ \hline
			\multirow{5}{*}{AR(4)} & \multirow{5}{*}{100} & HGW-M      & 0.96 & 0.13 & 1    & 0.33 \\
			&     & HGW-$\hat G$ & 0.98 & 0.13 & 1    & 0.34 \\
			&     & GLasso   & 0.32 & 0.29 & 0.95 & 0.25 \\
			&     & CLIME    & 0.25 & 0.40  & 0.89 & 0.24 \\
			&     & TIGER    & 0.27 & 0.31 & 0.93 & 0.23 \\ \hline
			\multirow{5}{*}{AR(4)} & \multirow{5}{*}{200} & HGW-M      & 0.81 & 0.12 & 1    & 0.30  \\
			&     & HGW-$\hat G$ & 0.85 & 0.12 & 1    & 0.31 \\
			&     & GLasso   & 0.21 & 0.27 & 0.96 & 0.21 \\
			&     & CLIME    & 0.11 & 0.35 & 0.88 & 0.13 \\
			&     & TIGER    & 0.19 & 0.29 & 0.95 & 0.19 \\
			\hline
		\end{tabular}
	}
	\caption{The summary statistics for graph selection under the AR(4) setting with various dimensions are reported for each method.} \label{table:comp3}
\end{table}

\begin{table}[!tb] 
	\centering
	\scalebox{1}{
		\begin{tabular}{ccccccc}
			\hline
			Setting & $p$   & Method & Precision & Sensitivity & Specificity & MCC  \\ \hline
			\multirow{5}{*}{Star}   & \multirow{5}{*}{100}    & HGW-M      & 1    & 1    & 1    & 1    \\
			&     & HGW-$\hat G$ & 0.99 & 1    & 1    & 0.99 \\
			&     & GLasso   & 0.38 & 1    & 0.97 & 0.61 \\
			&     & CLIME    & 0.13 & 0.79 & 0.90  & 0.30  \\
			&     & TIGER    & 0.33 & 1    & 0.96 & 0.56 \\ \hline
			\multirow{5}{*}{Circle} & \multirow{5}{*}{200} & HGW-M      & 1    & 1    & 1    & 0.99 \\
			&     & HGW-$\hat G$ & 0.99 & 0.99 & 1    & 0.99 \\
			&     & GLasso   & 0.31 & 1    & 0.98 & 0.55 \\
			&     & CLIME    & 0.08 & 1    & 0.87 & 0.26 \\
			&     & TIGER    & 0.28 & 1    & 0.97 & 0.52 \\
			\hline
		\end{tabular}
	}
	\caption{The summary statistics for graph selection under Setting 4 and Setting 5 with various dimensions are reported for each method.
	} \label{table:comp4}
\end{table}
Based on the simulation results, we notice that our methods overall work better than the regularization methods across various settings. 
Our methods perform particularly well in the sparse models under the AR(1), Star and Circle settings. This is because the consistency conditions of HGW are easier to satisfy under sparse settings. Note that when the posterior probability is larger than $1/2$, the median probability model based on HGW-M coincides with the posterior mode based on HGW-$\hat G$ \citep{barbieri2004optimal}.
Because we have proved the strong selection consistency (Theorem \ref{thm:selection_cons}), the two models should be asymptotically equivalent. This is indeed reflected in our simulations, as we notice HGW-M and HGW-$\hat G$ perform comparably well in most settings. Generally speaking, the proposed methods are able to achieve better specificity and precision, while the regularization methods have better sensitivity. The poor specificity of the regularization methods is in accordance with previous work demonstrating that selection of the regularization parameter using cross-validation is optimal with respect to prediction but tends to include more noise predictors compared with Bayesian methods \citep{meinshausen2006high}. 
Overall, our simulation studies indicate that the proposed method can perform well under a variety of configurations with different dimensions, sparsity levels and correlation structures.

\subsection{Simulation III: Illustration of inverse covariance estimation} \label{sec:illustration:estimation}
In this section, we provide the performance comparison for the inverse covariance estimation using different methods. For each fixed $p$, the true inverse covariance matrix
and the subsequent dataset, are generated by the same mechanism as in Section \ref{sec:illustration:graph:selection}. To use HGW-M for the estimation, within each iteration, we sample $\Omega^{(s)} \sim W_{G^{(s)}}(n + \nu , \,  \bfX_n^T \bfX_n + A)$ after Step 2(b), and construct our final estimate by taking the average of all the $\Omega^{(s)}$ after a burn-in period. 
In terms of the Bayes estimators based on the posterior mode, since the posterior mode is also decomposable, the Bayes estimators can be explicitly derived for that graph under various loss functions \citep{bala2008flexible,banerjee2014posterior}. Given the posterior mode, we consider two Bayes estimators $\hat \Omega^{\ell_1}$ and $\hat \Omega^{\ell_2}$ corresponding to the $\ell_1$ Stein's loss and $\ell_2$ squared-error loss, respectively. 
The estimated inverse covariance matrices based on other frequentist approaches are obtained as specified in Section \ref{sec:illustration:graph:selection}.
To evaluate the performance of covariance estimation, different criteria for measuring the estimation loss are reported at Tables \ref{table:comp5} to \ref{table:comp8}, where each simulation setting is repeated for 20 times. 
Relative errors are chosen as criteria. Specifically, for a matrix norm $\|\cdot\|$ and an estimator $\hat{\Omega}$, the relative error is defined as $\|\Omega_0 - \hat{\Omega}\| / \|\Omega_0\|$.
In Tables \ref{table:comp5}--\ref{table:comp8}, $E_1$, $E_2$, $E_3$ and $E_4$ represent the relative errors based on the matrix $\ell_1$-norm, the matrix $\ell_2$-norm (spectral norm), the vector $\ell_2$-norm (Frobenius norm) and the vector $\ell_\infty$-norm (entrywise maximum norm), respectively.
\begin{table}[!tb] 
	\centering
	\scalebox{1}{
		\begin{tabular}{cccccccc}
			\hline
			Setting                & $p$                    & Method               & $E_1$ & $E_2$ & $E_3$ & $E_4$ \\ \hline
			\multirow{6}{*}{AR(1)} & \multirow{6}{*}{100} & HGW-M                & 0.29        & 0.26         & 0.11  & 0.28        \\
			&                      & HGW-$\hat \Omega^{\ell_1}$ & 0.27        & 0.24         & 0.11  & 0.29        \\
			&                      & HGW-$\hat \Omega^{\ell_2}$ & 0.32        & 0.28         & 0.11  & 0.31        \\
			&                      & GLasso               & 0.92        & 0.88         & 0.85  & 0.83        \\
			&                      & CLIME                & 1.02        & 0.60          & 0.48  & 0.37        \\
			&                      & TIGER                & 0.86        & 0.80          & 0.76  & 0.75        \\ \hline
			\multirow{6}{*}{AR(1)} & \multirow{6}{*}{200} & HGW-M                & 0.37        & 0.34         & 0.15  & 0.38        \\
			&                      & HGW-$\hat \Omega^{\ell_1}$ & 0.32        & 0.27         & 0.13  & 0.35        \\
			&                      & HGW-$\hat \Omega^{\ell_2}$ & 0.35        & 0.31         & 0.15  & 0.40         \\
			&                      & GLasso               & 0.93        & 0.87         & 0.84  & 0.83        \\
			&                      & CLIME                & 1.16        & 0.64         & 0.55  & 0.43        \\
			&                      & TIGER                & 0.88        & 0.81         & 0.76  & 0.75       \\ \hline
		\end{tabular}
	}
	\caption{The summary statistics for inverse covariance estimation under the AR(1) setting with various dimensions are reported for each method.}\label{table:comp5}
\end{table} 

\begin{table}[!tb] 
	\centering
	\scalebox{1}{
		\begin{tabular}{cccccccc}
			\hline
			Setting                & $p$                    & Method               & $E_1$ & $E_2$ & $E_3$ & $E_4$ \\ \hline
			\multirow{6}{*}{AR(2)} & \multirow{6}{*}{100} & HGW-M                & 0.68 & 0.54 & 0.41 & 0.49 \\
			&                      & HGW-$\hat \Omega^{\ell_1}$  & 0.80  & 0.57 & 0.39 & 0.50  \\
			&                      & HGW-$\hat \Omega^{\ell_2}$  & 0.78 & 0.56 & 0.39 & 0.50  \\
			&                      & GLasso               & 0.85 & 0.73 & 0.64 & 0.55 \\
			&                      & CLIME                & 1.23 & 0.65 & 0.58 & 1.15 \\
			&                      & TIGER                & 0.85 & 0.72 & 0.63 & 0.55 \\ \hline
			\multirow{6}{*}{AR(2)} & \multirow{6}{*}{200} & HGW-M                & 0.81 & 0.64 & 0.47 & 0.53 \\
			&                      & HGW-$\hat \Omega^{\ell_1}$  & 0.80  & 0.61 & 0.48 & 0.52 \\
			&                      & HGW-$\hat \Omega^{\ell_2}$  & 0.80  & 0.60  & 0.47 & 0.58 \\
			&                      & GLasso               & 0.91 & 0.74 & 0.66 & 0.58 \\
			&                      & CLIME                & 3.48 & 1.85 & 1.24 & 3.95 \\
			&                      & TIGER                & 0.93 & 0.73 & 0.64 & 0.56 \\ \hline
		\end{tabular}
	}
	\caption{The summary statistics for inverse covariance estimation under the AR(2) setting with various dimensions are reported for each method.}\label{table:comp6}
\end{table} 

\begin{table}[!tb] 
	\centering
	\scalebox{1}{
		\begin{tabular}{cccccccc}
			\hline
			Setting                & $p$                    & Method              & $E_1$ & $E_2$ & $E_3$ & $E_4$ \\ \hline
			\multirow{6}{*}{AR(4)} & \multirow{6}{*}{100} & HGW-M                & 0.85 & 0.68 & 0.53 & 0.44 \\
			&                      & HGW-$\hat \Omega^{\ell_1}$ & 0.85 & 0.65 & 0.52 & 0.43 \\
			&                      & HGW-$\hat \Omega^{\ell_2}$ & 0.84 & 0.64 & 0.51 & 0.42 \\
			&                      & GLasso               & 0.82 & 0.72 & 0.6  & 0.49 \\
			&                      & CLIME                & 1.17 & 0.48 & 0.54 & 0.90  \\
			&                      & TIGER                & 0.84 & 0.71 & 0.58 & 0.48 \\ \hline
			\multirow{6}{*}{AR(4)} & \multirow{6}{*}{200} & HGW-M                & 0.94 & 0.67 & 0.53 & 0.59 \\
			&                      & HGW-$\hat \Omega^{\ell_1}$ & 0.87 & 0.72 & 0.57 & 0.53 \\
			&                      & HGW-$\hat \Omega^{\ell_2}$ & 0.87 & 0.71 & 0.56 & 0.52 \\
			&                      & GLasso               & 0.88 & 0.74 & 0.61 & 0.50  \\
			&                      & CLIME                & 2.66 & 1.18 & 1.01 & 2.54 \\
			&                      & TIGER                & 0.91 & 0.73 & 0.60  & 0.48      \\ \hline
		\end{tabular}
	}
	\caption{The summary statistics for inverse covariance estimation under the AR(4) setting with various dimensions are reported for each method.}\label{table:comp7}
\end{table} 

\begin{table}[!tb] 
	\centering
	\scalebox{1}{
		\begin{tabular}{cccccccc}
			\hline
			Setting                & $p$                    & Method              & $E_1$ & $E_2$ & $E_3$ & $E_4$ \\ \hline
			\multirow{6}{*}{Star}   & \multirow{6}{*}{100} & HGW-M                & 0.13 & 0.19 & 0.14 & 0.36 \\
			&                      & HGW-$\hat \Omega^{\ell_1}$ & 0.13 & 0.20  & 0.15 & 0.36 \\
			&                      & HGW-$\hat \Omega^{\ell_2}$ & 0.14 & 0.21 & 0.15 & 0.39 \\
			&                      & GLasso               & 0.27 & 0.29 & 0.21 & 0.39 \\
			&                      & CLIME                & 0.83 & 0.50  & 0.21 & 0.42 \\
			&                      & TIGER                & 0.27 & 0.30  & 0.21 & 0.38 \\ \hline
			\multirow{6}{*}{Circle} & \multirow{6}{*}{200} & HGW-M                & 0.56 & 0.44 & 0.17 & 0.50  \\
			&                      & HGW-$\hat \Omega^{\ell_1}$ & 0.56 & 0.51 & 0.20  & 0.52 \\
			&                      & HGW-$\hat \Omega^{\ell_2}$ & 0.54 & 0.50  & 0.18 & 0.50  \\
			&                      & GLasso               & 0.83 & 0.76 & 0.71 & 0.66 \\
			&                      & CLIME                & 1.14 & 0.63 & 0.54 & 0.40  \\
			&                      & TIGER                & 0.80  & 0.67 & 0.61 & 0.62       \\ \hline
		\end{tabular}
	}
	\caption{The summary statistics for inverse covariance estimation under Setting 4 and Setting 5 with various dimensions are reported for each method.}\label{table:comp8}
\end{table}

In terms of estimating the inverse covariance matrix, we can tell from the simulation results that our methods overall work better than the regularization methods across various settings. 
Similar to the performance for uncovering the true sparsity pattern in Section \ref{sec:illustration:graph:selection}, our methods can more accurately estimate the magnitudes of the true precision matrix in the sparse models under the AR(1), Star and Circle settings. Different Bayes estimators including the MCMC-based estimator, $\hat \Omega^{\ell_1}$ and $\hat \Omega^{\ell_2}$ perform comparably well, which again shows the validity of our theoretical results. 
Overall, our simulation studies indicate that the proposed method can accommodate a variety of configurations with different dimensions and correlation structures for estimating the inverse covariance matrix.

\section{Discussion}\label{sec:disc}
In this paper, we assume that the true graph $G_0$ is decomposable. 
Recently, \cite{niu2019bayesian} showed that, even when $G_0$ is non-decomposable, the marginal posterior of the graph $G$ concentrates on the space of the minimal triangulation of $G_0$.
Here, a triangulation of a graph $G=(V, E)$ is a decomposable graph $G^\Delta = (V, E \cup F)$, where $F$ is called a set of fill-in edges, and a triangulation is minimal if any only if the removal of any single edge in $F$ leads to a non-decomposable graph.
It would be interesting to investigate whether similar properties hold in our setting using the hierarchical $G$-Wishart prior.

Another open problem is whether we can relax the decomposability condition.
We assume that the support of the prior is a subset of all decomposable graphs mainly due to technical reasons.
By focusing on decomposable graphs, the normalizing constants of posteriors are available in closed forms.
This allows us to calculate upper and lower bounds of a posterior ratio.
It is unclear to us whether this decomposability condition can be removed.
Without this condition, general techniques for obtaining posterior convergence rate, for example, Theorem 8.9 in \cite{ghosal2017fundamentals}, might be needed.
\cite{banerjee2015bayesian} used this technique to prove the posterior convergence rate for sparse precision matrices under the Frobenius norm.
However, it might be difficult to obtain the posterior convergence rate under the matrix $\ell_1$-norm using similar arguments in \cite{banerjee2015bayesian}.
Let $\epsilon_n$ and $\tilde{\epsilon}_n$ be the posterior convergence rates for precision matrices under the matrix $\ell_1$-norm and Frobenius norm, respectively, where $\epsilon_n \ll \tilde{\epsilon}_n$.
Then, one can see that it is much more difficult to prove the prior thickness (condition (i) of Theorem 8.9 in \cite{ghosal2017fundamentals}) using $\epsilon_n$.
Therefore, we suspect that the arguments in \cite{banerjee2015bayesian} cannot be directly applied to our setting.

\begin{appendix}
\section{Proofs of main theorems}\label{sec:proofs}

\begin{proof}[Proof of Theorem \ref{thm:PBF_cons}]
	If $G \neq G_0$, then $G_0 \subsetneq G$ or $G_0 \nsubseteq G$.
	We first focus on the case $G_0 \subsetneq G$.
	By Lemma 2.22 in \cite{lauritzen1996graphical}, there exist a sequence of decomposable graphs $G_0 \subset G_1 \subset \cdots \subset G_{k-1} \subset G_k = G$ with $k = |G|-|G_0|$, where $G_0, G_1,\ldots, G_{k-1},G_k$ differ from by exactly one edge.
	Then,
	\bea
	\frac{f(\bfX_n \mid G)}{f(\bfX_n\mid G_0)}
	&=& \frac{f(\bfX_n\mid G_1 )}{f(\bfX_n\mid G_0 )} \frac{f(\bfX_n\mid G_2)}{f(\bfX_n\mid G_1 )} \times \cdots \times \frac{f(\bfX_n\mid G_k)}{f(\bfX_n\mid G_{k-1} )} .
	\eea
	For a given constant $C_1>0$, let $N_l(C_1) = \{ \bfX_n: | \hat{\rho}_{i_l j_l \mid S_l } - \rho_{i_l j_l \mid S_l }  |^2  > C_1 \log (n\vee p)/n  \}$, where $(i_l, j_l)$ is the added edge in the move from $G_{l-1}$ to $G_l$, and $S_l$ is the separator which separates two cliques including $i_l$ and $j_l$ in $G_{l-1}$.
	Note that $\rho_{i_l j_l \mid S_l } = 0$ for any $l=1,\ldots, k$ by Lemma D.4 in \cite{niu2019bayesian}.
	Thus, by the proof of Theorem A.3 and Corollary A.1 in \cite{niu2019bayesian}, we have
	\bea
	&& \bbP_0 ( \cup_{l=1}^k N_l(C_1) )  \\
	&\le& \sum_{l=1}^k \bbP_0 (N_l(C_1) ) \\
	&\le& \sum_{l=1}^k 21 \exp \Big\{ - (n- R) \frac{C_1 \log (n\vee p)}{2n}  \Big\} \Big(  \frac{n}{C_1(n- R)\log (n\vee p)} \Big)^{1/2}  \\
	&\le& 21 (|G| - |G_0| ) \exp \Big\{ - (n- R) \frac{C_1 \log (n\vee p)}{2n}  \Big\} \Big(  \frac{n}{C_1(n- R)\log (n\vee p)} \Big)^{1/2}  \\
	&\le& 21 \exp \Big[   - \Big\{ \frac{C_1}{2}\big( 1- \frac{ C_{r}}{\log (n\vee p)}\big) - 2 \Big\} \log (n\vee p) \Big] ,
	\eea
	which is of order $o(1)$ for any constant $C_1 > 4 + \epsilon'$ and any sufficiently small constant $\epsilon'>0$. 
	Therefore, we can restrict ourselves to event $\cap_{l=1}^k N_l(C_1)^c$.
	Because $\nu >2$ and $\alpha  > 5/2$,
	\bea
	\frac{f(\bfX_n\mid G_l)}{f(\bfX_n\mid G_{l-1} )} 
	&\le& g \Big(\frac{\nu+n +|S_l|}{\nu+|S_l|-1/2} \Big)^{1/2} \big( 1- \hat{\rho}_{i_l j_l \mid S_l}^2 \big)^{-n/2}   \\
	&\lesssim& (n\vee p)^{-\alpha}   \Big( 1+ \frac{n + 1/2}{\nu + |S_l| - 1/2 }\Big)^{1/2} \Big( 1 - \frac{C_1 \log (n\vee p)}{n} \Big)^{-n/2}  \\
	&\le& (n\vee p)^{-\alpha}   n^{1/2} \Big( 1 - \frac{C_1 \log (n\vee p)}{n} \Big)^{-n/2}  \\
	&\le&  \exp \Big\{  - \Big( \alpha -\frac{1}{2} - \frac{C_1}{2} \Big) \log (n\vee p)  \Big\}  
	\eea
	on $\cap_{l=1}^k N_l(C_1)^c$,
	where the first inequality follows from Lemma C.1 in \cite{niu2019bayesian}.
	The last expression is of order $o(1)$ by choosing a constant $C_1$ arbitrarily close to $4$.
	Thus, we have
	\bea
	\frac{f(\bfX_n\mid G )}{f(\bfX_n\mid G_0)} &\overset{p}{\lra}&  0
	\eea
	for any $G_0 \subsetneq G$, as $n\to\infty$.

	Now we consider the case $G_0 \nsubseteq G$.
	Let $(G \cup G_0)_m$ be a minimum triangulation of $G\cup G_0$.
	Note that
	\bea
	\frac{f(\bfX_n\mid G )}{f(\bfX_n\mid G_0)} 
	&=& \frac{f(\bfX_n\mid (G \cup G_0)_m )}{f(\bfX_n\mid G_0 )} \frac{f(\bfX_n\mid G)}{f(\bfX_n\mid (G \cup G_0)_m )}  .
	\eea
	Again by Lemma 2.22 in \cite{lauritzen1996graphical}, there exist a sequence of decomposable graphs $G_0 \subset G_1 \subset \cdots \subset G_k = (G \cup G_0)_m$  with $k = |(G \cup G_0)_m|-|G_0|$, where $G_0, G_1,\ldots, G_{k}$ differ from by exactly one edge.
	For $l=1,\ldots,k$, let $(i_l, j_l)$ be the added edge in the move from $G_{l-1}$ to $G_l$, and $S_l$ is the separator which separates two cliques including $i_l$ and $j_l$ in $G_{l-1}$.
	Similar to $G_0 \subsetneq G$ case, on $\cap_{l=1}^k N_l(C_1)^c$ for any constant $C_1 > 4 + \epsilon'$ and any sufficiently small constant $\epsilon'>0$, 
	\bea
	\frac{f(\bfX_n\mid (G \cup G_0)_m )}{f(\bfX_n\mid G_0 )} 
	&\le& \prod_{l=1}^k \Big\{  \frac{g}{g+1}  \Big(\frac{\nu + n + |S_l|}{\nu + |S_l|- 1/2}  \Big)^{1/2} \big(1 - \hat{\rho}_{i_l j_l \mid S_l}^2 \big)^{-n/2}   \Big\} \\
	&\le& \Big(  \frac{g}{g+1} \Big)^{k} \Big(\frac{\nu +n}{\nu - 1/2} \Big)^{k/2}  \exp \Big\{ \frac{C_1}{2} k \log (n\vee p)  \Big\} ,
	\eea
	where the first inequality follows from Lemma C.1 in \cite{niu2019bayesian}.
	On the other hand, let $G= G_0' \subset G_1' \subset \cdots G'_{k'} = (G\cup G_0)_m$ be a sequence of decomposable graphs with $k' = | (G\cup G_0)_m|-|G|$, where $G_0', \ldots, G'_{k'}$ differ from by exactly one edge.
	For $l=1,\ldots, k'$, let $(i_l' , j_l')$ be the added edge in the move from $G_{l-1}'$ to $G_l'$, and $S_l'$ is the separator which separates two cliques including $i_l'$ and $j_l'$ in $G_{l-1}'$.
	Because $|G|\le R$ and $|G_0|\le R$, we can choose a minimum triangulation of $G\cup G_0$ so that $|S_l'|\le 3R$ for any $l=1,\ldots, k'$.
	For a given constant $C_1'> 0$, let $N_l' (C_1') = \{ \bfX_n : | \hat{\rho}_{i_l' j_l' \mid S_l'} - \rho_{i_l' j_l' \mid S_l' } |^2 > C_1' \log (n\vee p) /n  \}$.	
	Note that, for some constant $C_1' > 12 + \epsilon'$ and  sufficiently small constant $\epsilon'>0$,
	\bea
	&& \bbP_0 ( \cup_{l=1}^k N_l'(C_1') ) \\
	&\le& \sum_{l=1}^k \bbP_0 (N_l'(C_1')) \\
	&\le& \sum_{l=1}^k \frac{21}{(1- |\rho_{i_l' j_l' \mid S_l'} |)^2}  \exp \Big\{ - (n- |S_l'|) \frac{C_1' \log (n\vee p)}{4n}  \Big\} \Big\{  \frac{n}{C_1'(n- |S_l'|)\log (n\vee p)} \Big\}^{1/2} \\
	&\le& 21 (1- \max_{1 \le l \le k}|\rho_{i_l' j_l' \mid S_l'} |)^{-2} \exp \Big[   - \Big\{ \frac{C_1'}{4}\big( 1- \frac{3R}{n}\big) - 2  \Big\} \log (n\vee p) \Big]  \\
	&\le& 21 \exp \Big[   - \Big\{ \frac{C_1'}{4}\big( 1- \frac{3C_{r} }{ \sqrt{n\log (n\vee p)} }\big) - 3  \Big\} \log (n\vee p) \Big] \,\,=\,\,o(1) ,
	\eea
	by Corollary A.1 in \cite{niu2019bayesian}, where the last inequality follows from Condition (A2).	
	Note that there exists at least one true edge in the move from $G$ to $(G\cup G_0)_m$, so let $(i_{l_0}' , j_{l_0}' )$ be a true edge in $G_0$ such that $\rho_{i_{l_0}' j_{l_0}' \mid S_{l_0}' } \neq 0$.
	On the set $\cap_{l=1}^{k'} N_l'(C_1')^c$, we have 
	\bea
	&& \frac{f(\bfX_n\mid G)}{f(\bfX_n\mid (G \cup G_0)_m )}  \\
	&\le& \prod_{l=1}^{k'} \Big\{  \frac{g+1}{g}  \Big( \frac{\nu + |S_l'|}{\nu+n + |S_l'| - 1/2 } \Big)^{1/2}  \big( 1 -\hat{\rho}^2_{i_l' j_l' \mid S_l'}   \big)^{n/2}  \Big\}  \\
	&\le& \Big( \frac{g+1}{g}\Big)^{k'}   \Big( \frac{\nu + 3R}{\nu +n +3R - 1/2 }\Big)^{k'/2}   \Big\{  1 -  \Big( \rho_{i_{l_0}' j_{l_0}' \mid S_{l_0}' }^2 - \frac{C_1' \log (n\vee p)}{n}  \Big) \Big\}^{n/2} \\
	&\le& \Big( \frac{g+1}{g}\Big)^{k'}   \Big( \frac{\nu + 3R}{\nu +n +3R - 1/2 }\Big)^{k'/2}   \exp \Big\{  -\frac{n}{2} \Big(  \rho_{i_{l_0}' j_{l_0}' \mid S_{l_0}' }^2 - \frac{C_1' \log (n\vee p)}{n}  \Big) \Big\}  \\
	&\le& \Big( \frac{g+1}{g}\Big)^{k'}   \Big( \frac{\nu + 3R}{\nu +n + 3R - 1/2 }\Big)^{k'/2} \exp \Big\{  - \Big( \frac{C_\beta R^2 - C_1'}{2}  \Big)  \log (n\vee p)   \Big\}  ,
	\eea
	by Lemma B.1 in \cite{niu2019bayesian}, Conditions (A1) and (A3).
	
	By combining the above results, for any $G_0 \nsubseteq G$, on the set $\{\cap_{l=1}^k N_l(C_1)^c\} \cap \{\cap_{l=1}^{k'} N_l'(C_1')^c\}$, we have 
	\bea
	&& \frac{f(\bfX_n \mid G)}{f(\bfX_n \mid G_0 )} \\
	&\le& \Big( \frac{g+1}{g}\Big)^{|G_0|-|G|} \exp \Big\{ \frac{C_1}{2} \big( |(G\cup G_0)_m| - |G_0| \big) \log (n\vee p) \Big\} \Big( \frac{\nu+n}{\nu -1/2} \Big)^{k/2} \\
	&&\times \,\, \Big(\frac{\nu+3R}{\nu+n +3R-1/2} \Big)^{k'/2} \exp \Big\{  - \Big( \frac{C_\beta R^2 - C_1'}{2}  \Big)  \log (n\vee p)   \Big\}  \\
	&\le& 2 \exp \Big\{ \alpha (|G_0|-|G|) \log (n \vee p)  \Big\} 
	\exp \Big\{ \frac{C_1}{2} \big( |(G\cup G_0)_m| - |G_0| \big) \log (n\vee p) \Big\} \\
	&& \times \,\, n^{-(|G_0|-|G|)/2} \Big(\frac{1 + \nu/n}{\nu - 1/2} \Big)^{k/2}
	\Big(\frac{\nu+3R}{1 + \nu/n +3R/n - 1/(2n)} \Big)^{k'/2} \\
	&& \times \,\, \exp \Big\{  - \Big( \frac{C_\beta R^2 - C_1'}{2}  \Big)  \log (n\vee p)   \Big\}  \\
	&\le& 2 \exp \Big\{ \alpha (|G_0|-|G|) \log (n \vee p)  \Big\} \,
	n^{-(|G_0|-|G|)/2} \\
	&& \times \,\, \exp \Big\{ \frac{C_1 + 1}{2}   |(G\cup G_0)_m|   \log (n\vee p) \Big\}  \exp \Big\{  - \Big( \frac{C_\beta R^2 - C_1'}{2}  \Big)  \log (n\vee p)   \Big\}   \\
	&\le& 2 \exp \Big\{ \Big( \alpha (|G_0|-|G|) + \frac{C_1'}{2}\Big) \log (n \vee p)  \Big\} \,
	n^{-(|G_0|-|G|)/2} \\
	&& \times \,\, \exp \Big\{  - \Big ( \frac{C_\beta}{2} - C_1-1 \Big) R^2 \log (n\vee p)   \Big\} ,
	\eea
	where the last inequality follows from $|(G \cup G_0)_m| \le |G \cup G_0|^2/2 \le |G|^2+|G_0|^2 \le 2 R^2$ by condition (A1).
	The last expression is of order $o(1)$ by choosing a constant $C_1$ arbitrarily close to $4$, because $C_\beta > 10$.
	Thus, we have
	\bea
	\frac{f(\bfX_n\mid G )}{f(\bfX_n\mid G \cup G_0 )}  &\overset{p}{\lra}&  0 
	\eea
	for any $G_0 \nsubseteq G$ as $n\to\infty$, which completes the proof.
\end{proof}

\begin{proof}[Proof of Theorem \ref{thm:post_ratio}]
	Similar to the proof of Theorem \ref{thm:PBF_cons}, we consider two cases: $G_0 \subsetneq G$ and $G_0 \nsubseteq G$.
	Compared to the ratio of marginal likelihoods in Theorem \ref{thm:PBF_cons}, we only need to consider the additional prior ratio term.	
	
	If $G_0 \subsetneq G$, we focus on the event $\cap_{l=1}^k N_l(C_1)^c$ defined in the proof of Theorem \ref{thm:PBF_cons}.
	Then, by the proof of Theorem \ref{thm:PBF_cons}, we have
	\bea
	&&  \frac{\pi(G \mid \bfX_n)}{\pi(G_0 \mid \bfX)}  \\
	&\le& \frac{f(\bfX_n \mid G)}{f(\bfX_n \mid G_0)} 
	\exp \{ C_\tau (|G_0| - |G | )\log (n\vee p) \,  \}
	\binom{p(p-1)/2}{|G|}^{-1}   \binom{p(p-1)/2}{|G_0|} \\
	&\le& \exp \Big\{  - \Big( \alpha + C_\tau -\frac{1}{2}- \frac{C_1}{2} \Big) (|G|-|G_0|)\log (n\vee p)  \Big\}  \\
	&& \times \,\, \prod_{l=1}^k \bigg\{   \binom{p(p-1)/2}{|G_l|}^{-1}   \binom{p(p-1)/2}{|G_{l-1}|}  \bigg\}  \\
	&\le&  \exp \Big\{  - \Big( \alpha + C_\tau -\frac{1}{2}- \frac{C_1}{2} \Big) (|G|-|G_0|)\log (n\vee p)  \Big\}  \\
	&& \times \,\,  \prod_{l=1}^k \bigg\{  \frac{|G_{l-1}| +1}{p(p-1)/2 -|G_{l-1}| } \bigg\} \\
	&\le&  \exp \Big\{  - \Big( \alpha + C_\tau -\frac{1}{2}- \frac{C_1}{2} \Big) (|G|-|G_0|)\log (n\vee p)  + (|G|-|G_0|) \log R \Big\}  \\
	&\le& \exp \Big\{  - \Big( \alpha + C_\tau -1 - \frac{C_1}{2} \Big) (|G|-|G_0|)\log (n\vee p)   \Big\} 
	\eea
	which is of order $o(1)$ by choosing $C_1$ arbitrarily close to $4$, because $\alpha + C_\tau > 3$.

	If $G_0 \nsubseteq G$, we focus on the event $\{\cap_{l=1}^k N_l(C_1)^c\} \cap \{\cap_{l=1}^{k'} N_l'(C_1')^c\}$ defined in the proof of Theorem \ref{thm:PBF_cons}.
	Then, by the proof of Theorem \ref{thm:PBF_cons}, we have
	\bea
	&&  \frac{\pi(G \mid \bfX_n)}{\pi(G_0 \mid \bfX)}   \\
	&=& \frac{f(\bfX_n \mid G)}{f(\bfX_n \mid G_0)} 
	\exp \{ C_\tau (|G_0| - |G | )\log (n\vee p) \,  \}
	\binom{p(p-1)/2}{|G|}^{-1}   \binom{p(p-1)/2}{|G_0|}  \\
	&\le& 2 \exp \Big\{ \Big( \alpha (|G_0|-|G|) + \frac{C_1'}{2}\Big) \log (n \vee p)  \Big\} \,
	n^{-(|G_0|-|G|)/2} \\
	&& \times \,\, \exp \Big\{  - \Big ( \frac{C_\beta}{2} - C_1-1 \Big) R^2 \log (n\vee p)   \Big\} \\
	&& \times \,\, \exp \Big\{  \big(C_\tau |G_0|  + 2|G_0|\big) \log (n\vee p)  \Big\} ,
	\eea
	which is of order $o(1)$ by choosing $C_1$ arbitrarily close to $4$, because $C_\beta > 10$.
\end{proof}

\begin{proof}[Proof of Theorem \ref{thm:selection_cons}]
	 
	Note that
	\bean
	\pi (G \neq G_0 \mid \bfX_n)
	&=& \pi ( G_0 \subsetneq G  \mid \bfX_n)  + \pi (G_0 \nsubseteq G \mid \bfX_n)  \nonumber\\
	&\le& \sum_{G: G_0 \subsetneq G} \frac{\pi ( G  \mid \bfX_n)}{\pi ( G_0  \mid \bfX_n)}  + \sum_{G: G_0 \nsubseteq G} \frac{\pi ( G  \mid \bfX_n)}{\pi ( G_0  \mid \bfX_n)}  . 	\label{selection_twoparts}
	\eean
	For a given constant $C_1>0$, we define 
	\bea
	I_{d}   &=&  \big\{(i,j, S) :    1\le i <j \le p, \,\,  S \subset V \setminus \{i,j\} ,\,\, |S| \le 3R  \\
	&& \quad\quad\quad\quad \,\,  (i,j)\in E_0 \text{ if and only if } \rho_{ij \mid S} = 0  \big\}  ,  \\
	N_{ijS,1}(C_1) 
	&=& \Big\{  \bfX_n: |\hat{\rho}_{ij \mid S} |^2 > \frac{C_1 R \log (n\vee p)}{n}  \Big\}   
	\eea
	for all  $(i,j,S)$ such that $\rho_{ij \mid S} = 0$ and 
	\bea
	N_{ijS,2}(C_1) 
	&=& \Big\{  \bfX_n: |\hat{\rho}_{ij \mid S} - \rho_{ij \mid S}|^2 > \frac{2C_1 R \log (n\vee p)}{n}    \Big\}   
	\eea
	for all  $(i,j,S)$ such that $\rho_{ij \mid S} \neq 0$.
	Let $N_{ijS}(C_1) = N_{ijS,1}(C_1)  \cup N_{ijS,2}(C_1) $.
	Then by Corollary A.1 in \cite{niu2019bayesian}, 
	\bea
	&&  \bbP_0  \Big( \bigcup_{(i,j,S) \in I_d } N_{ijS}(C_1)  \Big) \\
	&\le& \sum_{(i,j,S)\in I_d } \Big\{  \bbP_0 ( N_{ijS,1}(C_1) ) + \bbP_0 ( N_{ijS,2}(C_1) ) \Big\} \\
	&\le& \sum_{(i,j,S)\in I_d } \frac{21}{(1- |\rho_{i j \mid S} |)^2}  \exp \Big\{ - (n- 3R) \frac{C_1 R \log (n\vee p)}{2n}  \Big\} \\
	&& \quad\quad\quad\quad \times \,\,\Big\{  \frac{n}{C_1 R (n- 3R)\log (n\vee p)} \Big\}^{1/2}   \\
	&\le& \sum_{|S|=0}^{3R} \binom{p}{|S|}\binom{p-|S|}{2}  21 (1- \max_{(i,j,S)\in I_d}|\rho_{i j \mid S} |)^{-2} \exp \Big[   - \Big\{ \frac{C_1 R}{2}\big( 1- \frac{3R}{n}\big)  \Big\} \log (n\vee p) \Big]  \\
	&\le& \sum_{s=0}^{3R} p^{s+2}  21 \exp \Big[   - \Big\{ \frac{C_1 R}{2}\big( 1- \frac{3R}{n}\big)  -1 \Big\} \log (n\vee p) \Big]  \\
	&\le& 21 p^{3R +2} \exp \Big[   - \Big\{ \frac{C_1 R}{2}\big( 1- \frac{3R}{n}\big)  -1 \Big\} \log (n\vee p) \Big]  \\
	&\le& 21 \exp \Big[   - \Big\{ \frac{C_1 R}{2}\big( 1- \frac{3R}{n}\big)  - 3R - 3 \Big\} \log (n\vee p) \Big] ,
	\eea
	which is of order $o(1)$ if we take the constant $C_1$ such that $C_1 > 6 + \epsilon'$ for any sufficiently small constant $\epsilon'>0$.
	Therefore, we restrict ourselves to event $\cap_{(i,j,S)\in I_d} N_{ijS}(C_1)^c$ in the rest.
	
	The first term in \eqref{selection_twoparts} is bounded above by
	\bea
	&& \sum_{G: G_0 \subsetneq G} \frac{\pi ( G  \mid \bfX_n)}{\pi ( G_0  \mid \bfX_n)}   \\
	&\le&   \sum_{G: G_0 \subsetneq G}   \frac{\pi(G)}{\pi(G_0)}   \frac{f( \bfX_n\mid G)}{f(\bfX_n\mid G_0)}   \\
	&\lesssim&   \sum_{G: G_0 \subsetneq G}   \frac{\pi(G)}{\pi(G_0)}      \exp \Big\{  - (|G| - |G_{0}|)\Big( \alpha  - \frac{1}{2R} - \frac{C_1}{2} \Big) R\log (n\vee p) \Big\}  \\
	&\le& \sum_{G: G_0 \subsetneq G} \frac{\binom{p(p-1)/2}{|G_{0}|}}{ \binom{p(p-1)/2}{|G|} }      \exp \{ - C_\tau \log p \, (|G| - |G_{0}|)  \}   \\
	&& \quad\quad\quad \times \,\,   \exp \Big\{  - (|G| - |G_{0}|)\Big( \alpha - \frac{1}{2R} - \frac{C_1}{2} \Big) R\log (n\vee p)  \Big\}   \\
	&\le& \sum_{s = |G_0| + 1}^{p(p-1)/2 } \binom{p(p-1)/2 - |G_0| }{s- |G_0|}   \frac{\binom{p(p-1)/2}{|G_0|}}{ \binom{p(p-1)/2}{s} }    \exp \{ -   C_\tau (s - |G_{0}|) \log (n\vee p)  \}   \\
	&& \quad\quad\quad \times  \exp \Big\{  - (s - |G_{0}|)\Big( \alpha  - \frac{1}{2R} - \frac{C_1}{2} \Big) R\log (n\vee p)  \Big\}  \\
	&=& \sum_{s = |G_0| + 1}^{p(p-1)/2 }  \binom{s}{s-|G_0|}   \exp \{ -  C_\tau (s - |G_{0}|) \log (n\vee p)  \}    \\
	&& \quad\quad\quad \times \,\, \exp \Big\{  - (s - |G_{0}|) \Big( \alpha  - \frac{1}{2R} - \frac{C_1}{2} \Big) R \log (n\vee p)  \Big\}  \\
	&\le& \sum_{s= |G_0| + 1}^{p(p-1)/2 }  \exp \Big[  -  \Big\{ R\big( \alpha  - \frac{1}{2R} - \frac{C_1}{2} \big)   + C_\tau - 2 \Big\}  (s-|G_0|)\log (n\vee p)   \Big] \,\, =\,\, o(1)
	\eea
	by taking a constant $C_1$ arbitrarily close to $6$ because $g = (n\vee p)^{- R \alpha}$ and $\alpha > 3$.
	
	Now we focus on the second term in \eqref{selection_twoparts}.
	Note that
	\bea
	&&\sum_{G: G_0 \nsubseteq G} \frac{\pi ( G  \mid \bfX_n)}{\pi ( G_0  \mid \bfX_n)}   \\
	&\le&   \sum_{G: G_0 \nsubseteq G}   \frac{\pi(G)}{\pi(G_0)}   \frac{f( \bfX_n\mid G)}{f(\bfX_n\mid G_0)}   \\
	&\le&  \sum_{G: G_0 \nsubseteq G}   \frac{\binom{p(p-1)/2}{|G_{0}|}}{ \binom{p(p-1)/2}{|G|} }   \exp \big\{  - C_\tau (|G|- |G_0|)\log (n\vee p) \big\}  \\
	&& \times \,\, \frac{f(\bfX_n\mid (G \cup G_0)_m )}{f(\bfX_n\mid G_0 )} \frac{f(\bfX_n\mid G)}{f(\bfX_n\mid (G \cup G_0)_m )}   \\
	&\le&  \sum_{G: G_0 \nsubseteq G}  \frac{\binom{p(p-1)/2}{|G_{0}|}}{ \binom{p(p-1)/2}{|G|} }  \exp \big\{  - C_\tau (|G|- |G_0|)\log (n\vee p) \big\}  \\
	&& \times \,\, n^{-(|G_0|-|G|)/2}  
	\Big(\frac{\nu+3R}{1 + \nu/n +3R/n - 1/(2n)} \Big)^{R^2/2} \\
	&& \times \,\, 2 \exp \Big\{ \alpha (|G_0|-|G|) R \log (n \vee p)  \Big\}  \\
	&& \times \,\, \exp \Big\{ \frac{C_1}{2} R \big( |(G\cup G_0)_m| - |G_0| \big) \log (n\vee p) \Big\}   \\
	&& \times \,\, \exp \Big\{  - \Big( \frac{C_\beta R^3 - 2 C_1 R}{2}  \Big)  \log (n\vee p)   \Big\}  
	\eea 
	and 
	\bea
	&& \sum_{G: G_0 \nsubseteq G}   \frac{\binom{p(p-1)/2}{|G_{0}|}}{ \binom{p(p-1)/2}{|G|} }   \\
	&\le& \sum_{s = 0}^{p(p-1)/2}  \sum_{t=0}^{(|G_0|-1) \wedge s } \binom{|G_0|}{t} \binom{p(p-1)/2 - |G_0|}{s-t} \frac{\binom{p(p-1)/2}{|G_{0}|}}{ \binom{p(p-1)/2}{s} }   \\
	&=& \sum_{s = 0}^{p(p-1)/2}  \sum_{t=0}^{(|G_0|-1) \wedge s } \binom{s}{t} \binom{p(p-1)/2 - s}{|G_0| - t} \\
	&\le& \sum_{s = 0}^{p(p-1)/2}  \sum_{t=0}^{(|G_0|-1) \wedge s }  (p^2 s)^{|G_0|- t} s^{-(|G_0| - s)} \\
	&\le& \sum_{s = 0}^{p(p-1)/2}  \sum_{t=0}^{  (|G_0|-1) \wedge s } \exp \Big\{  4(|G_0|-t) \log (n\vee p)  - (|G_0|-s) \log s    \Big\}		.
	\eea
	Thus, we have
	\bea
	&&  \sum_{G: G_0 \nsubseteq G} \frac{\pi ( G  \mid \bfX_n)}{\pi ( G_0  \mid \bfX_n)}  \\
	&\le& \sum_{s = 0}^{|G_0|-1}  \sum_{t=0}^{  s } 2 \exp \Big[ \big\{ 4(|G_0|-t)   + (C_\tau + \alpha R) (|G_0|-s)   \big\}  \log (n\vee p)  \Big] \\
	&& \times \,\,  \exp \Big\{ - \Big(   \frac{C_\beta - C_1}{2} + \frac{1}{2R} + \frac{C_1}{R^2} \Big)  R^3 \log (n\vee p)   \Big\} \\
	&+& \sum_{s = |G_0|}^{p(p-1)/2}  \sum_{t=0}^{|G_0|-1}   2 \exp \Big[ \big\{ 4(|G_0|-t)   + \big(C_\tau + \alpha R -\frac{3}{2} \big) (|G_0|-s)   \big\}  \log (n\vee p)  \Big]  \\
	&& \times \,\,  \exp \Big\{ - \Big(   \frac{C_\beta - C_1}{2} + \frac{1}{2R} + \frac{C_1}{R^2} \Big)  R^3 \log (n\vee p)   \Big\} ,
	\eea
	which is of order $o(1)$ by taking a constant $C_1$ arbitrarily close to $6$ and  $C_\beta >6$.
	This completes the proof.
\end{proof}

\begin{proof}[Proof of Theorem \ref{thm:post_conv}]
	Let $\epsilon_n = M \tilde{s}_0^2 \sqrt{\log(n\vee p)/n}$.
	Then,
	\bea
	\bbE_0  \Big\{  \pi \big( \|\Omega- \Omega_0\|_1 \ge  \epsilon_n \mid \bfX_n  \big) \Big\}
	&\le& \bbE_0  \Big\{  \pi \big( \|\Omega- \Omega_0\|_1 \ge  \epsilon_n , G= G_0 \mid \bfX_n  \big) \Big\} \\
	&&+ \,\, \bbE_0 \big\{ \pi(G \neq G_0 \mid \bfX_n)  \big\} .
	\eea
	Note that the last term in the right hand side goes to zero as $n\to\infty$ by Theorem \ref{thm:selection_cons}.
	Since 
	\bea
	&& \pi \big( \|\Omega- \Omega_0\|_1 \ge  \epsilon_n , G= G_0 \mid \bfX_n  \big)  \\
	&=& \pi \big( \|\Omega- \Omega_0\|_1 \ge  \epsilon_n  \mid G= G_0, \bfX_n  \big)   \pi( G=G_0 \mid \bfX_n) ,
	\eea
	it suffices to show that 
	\bea
	\pi \big( \|\Omega- \Omega_0\|_1 \ge  \epsilon_n  \mid G= G_0, \bfX_n  \big) 
	&\overset{p}{\lra}& 0
	\eea
	as $n\to\infty$.

	Let $P_{0,1}^{(j)}, \ldots, P_{0, w_j}^{(j)}$ and $S_{0,1}^{(j)},\ldots, S_{0, w_j' }^{(j)}$ be the cliques and separators, respectively, containing the vertex $j$ in $G_0$, selected while maintaining the perfect ordering.
	Note that $w_j \le \tilde{s}_0$ for any $j$, and 
	\bea
	\Omega &=& \sum_{l=1}^{h_0} \{(\sg_{P_{0,l}} )^{-1}\}^0 - \sum_{l=2}^{h_0} \{(\sg_{S_{0,l}} )^{-1}\}^0
	\eea
	for any $\Omega = \sg^{-1}\in P_{G_0}$ (\cite{lauritzen1996graphical}, page 145), where $(A_{P})^0 = (A_{(i,j)}^0) \in \bbR^{p\times p}$ with $A_{(i,j)}^0 = A_{(i,j)}$ for $i,j\in P$ and $A_{(i,j)}^0 = 0$ otherwise for any matrix $A =(A_{(i,j)})$.
	Thus, we have
	\bean
	&& \pi \big( \|\Omega- \Omega_0\|_1 \ge  \epsilon_n  \mid G= G_0, \bfX_n  \big)  \nonumber\\
	&\le& \pi \Big(  \Big\| \sum_{l=1}^{h_0} \{ (\sg_{P_{0,l}})^{-1}  - (\sg_{0,P_{0,l}})^{-1} \}^0  \Big\|_1 \ge \frac{\epsilon_n}{2}    \mid G= G_0 , \bfX_n \Big) \nonumber\\
	&+& \pi \Big(  \Big\| \sum_{l=2}^{h_0} \{ (\sg_{S_{0,l}})^{-1}  - (\sg_{0,S_{0,l}})^{-1} \}^0  \Big\|_1 \ge \frac{\epsilon_n}{2}    \mid G= G_0 , \bfX_n \Big)  \nonumber \\
	&\le&\hspace{-.3cm} \pi \Big(  \max_{1\le j \le p} \Big\| \Big[ \sum_{l=1}^{h_0} \{ (\sg_{P_{0,l}})^{-1}  - (\sg_{0,P_{0,l}})^{-1} \}^0 \Big]_{(\cdot, j)} \Big\|_1 \ge \frac{\epsilon_n}{2}    \mid G= G_0 , \bfX_n \Big) \label{P_term}\\
	&\hspace{-.5cm}+&\hspace{-.5cm}  \pi \Big( \max_{1\le j \le p}  \Big\| \Big[ \sum_{l=2}^{h_0} \{ (\sg_{S_{0,l}})^{-1}  - (\sg_{0,S_{0,l}})^{-1} \}^0  \Big]_{(\cdot, j)}  \Big\|_1 \ge \frac{\epsilon_n}{2}    \mid G= G_0 , \bfX_n \Big)  , \label{S_term}
	\eean 
	where $A_{(\cdot, j)}$ is the $j$ column of $A$  for any matrix $A$.
	Let $\|A\| := \sup_{x\in \bbR^p, \|x\|_2=1 } \| Ax\|_2$ be the spectral norm of a matrix $A$.
	Then, 
	\bea
	 && \max_{1\le j \le p}\Big\| \Big[ \sum_{l=1}^{h_0} \{ (\sg_{P_{0,l}})^{-1}  - (\sg_{0,P_{0,l}})^{-1} \}^0 \Big]_{(\cdot, j)} \Big\|_1  \\
	 &\le& \max_{1\le j \le p} \sum_{l=1}^{w_j}  \big\|  (\sg_{P_{0,l}^{(j)}})^{-1} -   (\sg_{0, P_{0,l}^{(j)}})^{-1}   \big\|_1  \\
	 &\le& \max_{1\le j \le p} \max_{1\le l \le w_j} \tilde{s}_0  \sqrt{|P_{0,l}^{(j)}|}  \big\|  (\sg_{P_{0,l}^{(j)}})^{-1} -   (\sg_{0, P_{0,l}^{(j)}})^{-1}   \big\| . 
	\eea
	Hence, \eqref{P_term} is bounded above by 
	\bea
	p \tilde{s}_0 \cdot\max_{1\le j \le p} \max_{1\le l \le w_j} \pi \Big(  \tilde{s}_0  \sqrt{|P_{0,l}^{(j)}|}  \big\|  (\sg_{P_{0,l}^{(j)}})^{-1} -   (\sg_{0, P_{0,l}^{(j)}})^{-1}   \big\| \ge \frac{\epsilon_n}{2}    \mid G= G_0 , \bfX_n \Big) ,
	\eea
	and similarly, \eqref{S_term} is bounded above by
	\bea
	p \tilde{s}_0 \cdot \max_{1\le j \le p} \max_{1\le l \le w_j'} \pi \Big(  \tilde{s}_0  \sqrt{|S_{0,l}^{(j)}|}  \big\|  (\sg_{S_{0,l}^{(j)}})^{-1} -   (\sg_{0, S_{0,l}^{(j)}})^{-1}   \big\| \ge \frac{\epsilon_n}{2}    \mid G= G_0 , \bfX_n \Big) .
	\eea
	For a given index $j \in [p]$, let
	\bea
	N_{1nj} &:=&  \bigcup_{1\le l \le w_j}  \Big\{ \Omega: \|  (\sg_{P_{0,l}^{(j)}})^{-1} -   (\sg_{0, P_{0,l}^{(j)}})^{-1} \|^2 \ge  \frac{M^2}{9} |{P}_{0,l}^{(j)}| \frac{\log(n\vee p)}{n}    \Big\} , \\
	N_{2nj} &:=&  \bigcup_{1\le l \le w_j'}  \Big\{ \Omega: \|  (\sg_{S_{0,l}^{(j)}})^{-1} -   (\sg_{0, S_{0,l}^{(j)}})^{-1} \|^2 \ge  \frac{M^2}{9} |{S}_{0,l}^{(j)}| \frac{\log(n\vee p)}{n}    \Big\} ,
	\eea
	and $N_{nj} = N_{1nj} \cup N_{2nj}$, 
	then, on the event $\cap_{1\le j \le p} N_{nj}^c$, for example,
	\bea
	\tilde{s}_0 \sqrt{|{P}_{0,l}^{(j)}|} \| (\sg_{P_{0,l}^{(j)}})^{-1} -   (\sg_{0, P_{0,l}^{(j)}})^{-1} \|  
	&\le& \frac{M}{3}  \tilde{s}_0   |{P}_{0,l}^{(j)}| \Big\{ \frac{\log(n\vee p)}{n}  \Big\}^{1/2}   \\
	&\le& \frac{M}{3} \tilde{s}_0^2 \Big\{ \frac{\log(n\vee p)}{n}  \Big\}^{1/2} .
	\eea
	Similar inequalities hold using $S_{0,l}^{(j)}$ instead of $P_{0,l}^{(j)}$.
	Thus, we complete the proof by showing that
	\bea
	\pi \Big(  \bigcup_{j=1}^p N_{1nj}  \mid G=G_0,\bfX_n\Big)  \,\,\le\,\,
	p \tilde{s}_0 \,\max_j \pi \Big(  N_{1nj}  \mid G=G_0,\bfX_n\Big)  &\overset{p}{\lra}& 0
	\eea
	as $n\to\infty$ because $N_{2nj}$ can be dealt with using similar techniques.

	For any $j\in [p]$, 
	\bean
	&& \bbE_0 \Big\{ \pi \Big(  N_{1nj}  \mid G=G_0,\bfX_n\Big)  \Big\}  \nonumber \\
	&\le& \sum_{1\le l \le w_j} \bbE_0 \Big\{ \pi \Big(   \| (\sg_{P_{0,l}^{(j)}})^{-1} -   (\sg_{0, P_{0,l}^{(j)}})^{-1} \|^2 \ge  \nonumber \\
	&& \hspace{5cm} \frac{M^2}{9} |{P}_{0,l}^{(j)}| \frac{\log(n\vee p)}{n}   \,\,\Big|\,\, G=G_0,\bfX_n\Big)  \Big\}  \nonumber \\
	&\le&\hspace{-.3cm}  \sum_{1\le l \le w_j}\bbE_0 \Big\{ \pi \Big(   \| (\sg_{P_{0,l}^{(j)}})^{-1}  - \bbE^\pi((\sg_{P_{0,l}^{(j)}})^{-1} \mid \bfX_n) \|^2 \ge  \nonumber\\
	&& \hspace{4.5cm}  \frac{M^2}{36} |{P}_{0,l}^{(j)}| \frac{\log(n\vee p)}{n}   \,\,\Big|\,\, G=G_0,\bfX_n\Big)  \Big\}  \label{post_conv1}\\
	&&+\,\, \sum_{1\le l \le w_j}\bbP_0 \Big\{   \|\bbE^\pi( (\sg_{P_{0,l}^{(j)}})^{-1} \mid \bfX_n)  - (\sg_{0,P_{0,l}^{(j)}})^{-1}  \|^2  \ge  \nonumber \\
	&& \hspace{4.5cm} \frac{M^2}{36} |{P}_{0,l}^{(j)}| \frac{\log(n\vee p)}{n}   \Big\} , \label{post_conv2}
	\eean
	where $\bbE^\pi( (\sg_{{P}_{0,l}^{(j)} })^{-1}  \mid \bfX_n) $ is the posterior mean of $(\sg_{{P}_{0,l}^{(j)} })^{-1}$.
	By the property of the $G$-Wishart distribution, for any complete subset ${P}_{0,l}^{(j)}$ in $G_0$,  we have
	$ (\sg_{{P}_{0,l}^{(j)} })^{-1} \mid \bfX_n \sim W_{|{P}_{0,l}^{(j)}|}(n+\nu,  (1+g) (\bfX_n^T \bfX_n)_{{P}_{0,l}^{(j)}} )$ (\cite{roverato2002hyper}, Corollary 2).
	Here $W_q(\nu, A)$ denotes the Wishart distribution for $q\times q$ positive definite matrices $B$ with the probability density proportional to $\det(B)^{(\nu-2)/2} \exp \{ -  tr(B A) /2 \}$.
	Thus, we have $\bbE^\pi( (\sg_{{P}_{0,l}^{(j)} })^{-1} \mid \bfX_n)  = (n+\nu +|{P}_{0,l}^{(j)}| -1) (1+g)^{-1} (\bfX_n^T \bfX_n)_{{P}_{0,l}^{(j)}}^{-1}$, where $(\bfX_n^T \bfX_n)_{{P}_{0,l}^{(j)}}^{-1}$ is the inverse of $(\bfX_n^T \bfX_n)_{{P}_{0,l}^{(j)}}$.
	Note that 
	\bea
	\|\bbE^\pi( (\sg_{{P}_{0,l}^{(j)} })^{-1} \mid \bfX_n) \|
	&=& \{ 1+(\nu +|{P}_{0,l}^{(j)}| -1)/n \} (1+g)^{-1} \| (n^{-1} \bfX_n^T \bfX_n)_{{P}_{0,l}^{(j)}  }^{-1} \| \\
	&\le& (2 + \tilde{s}_0/n ) \max_{1\le l \le w_j}\| (n^{-1} \bfX_n^T \bfX_n)_{{P}_{0,l}^{(j)}}^{-1} \|  \\
	&\le& 3 \max_{1\le l \le w_j}\| (n^{-1} \bfX_n^T \bfX_n)_{{P}_{0,l}^{(j)}}^{-1} \| .
	\eea
	For a given constant $C_\lambda >0$, define the set
	\bea
	\tilde{N}_{nj}(C_\lambda) &:=& \Big\{  \bfX_n:   \max_{1\le l \le w_j}\| (n^{-1} \bfX_n^T \bfX_n)_{{P}_{0,l}^{(j)}}^{-1} \| >   C_\lambda/3  \Big\},
	\eea
	then $\|\bbE^\pi( (\sg_{{P}_{0,l}^{(j)} })^{-1} \mid \bfX_n) \| \le  C_\lambda$ on the event $\tilde{N}_{nj}(C_\lambda)^c$.
	By Lemma B.6 in \cite{lee2018optimal}, the posterior probability inside the expectation in \eqref{post_conv1} is bounded above by 
	\bea
	5^{ |{P}_{0,l}^{(j)}|} \big\{  e^{ - c_1 (n+\nu) M^2  |{P}_{0,l}^{(j)}| \log(n\vee p)/n    }    +  e^{ - c_2(n+\nu) M \sqrt{|{P}_{0,l}^{(j)}|\log(n\vee p)/n}  } \big\} 
	\eea
	on the event $\tilde{N}_{nj}(C_\lambda)^c$, for some positive constants $c_1$ and $c_2$ depending only on $C_\lambda$.
	We note here that we are using different parametrization for Wishart and inverse Wishart distributions compared to \cite{lee2018optimal}.
	Moreover, by Lemma B.7 in \cite{lee2018optimal} and Condition (B4),  
	\bea
	&& \bbP_0 \big( \tilde{N}_{nj}(C_\lambda) \big) \\
	&=&  \bbP_0 \Big( \max_{1\le l \le w_j} \| (n^{-1} \bfX_n^T \bfX_n)_{{P}_{0,l}^{(j)}}^{-1} \| >   C_\lambda/3    \Big)  \\
	&\le& \sum_{1\le l \le w_j}  \bbP_0 \Big( \| (n^{-1} \bfX_n^T \bfX_n)_{{P}_{0,l}^{(j)}}^{-1} \| >   C_\lambda/3 \Big)   \\
	&\le& \sum_{1\le l \le w_j}  \bbP_0 \Big( \|\sg_{0,{P}_{0,l}^{(j)} }\| \| (\sg_{0,{P}_{0,l}^{(j)} })^{-1/2}  (n^{-1} \bfX_n^T \bfX_n)_{{P}_{0,l}^{(j)}}^{-1} (\sg_{0,{P}_{0,l}^{(j)} })^{-1/2}\| >   C_\lambda/3 \Big)   \\
	&\le& \sum_{1\le l \le w_j}  \bbP_0 \Big( \epsilon_0^{-1} \| (\sg_{0,{P}_{0,l}^{(j)} })^{-1/2}  (n^{-1} \bfX_n^T \bfX_n)_{{P}_{0,l}^{(j)}}^{-1} (\sg_{0,{P}_{0,l}^{(j)} })^{-1/2}\| >   C_\lambda/3 \Big)   \\
	&=& \sum_{1\le l \le w_j}  \bbP_0 \Big(  \lambda_{\min}( (\sg_{0,{P}_{0,l}^{(j)} })^{-1/2}  (n^{-1} \bfX_n^T \bfX_n)_{{P}_{0,l}^{(j)}}^{-1} (\sg_{0,{P}_{0,l}^{(j)} })^{-1/2} ) <   3/ (\epsilon_0 C_\lambda) \Big)  \\ 
	&\le& \sum_{1\le l \le w_j}  2 e^{ -n (1- \sqrt{|{P}_{0,l}^{(j)}|/n})^2 /8 }  \\
	&\le& 2  p e^{ -n (1- \sqrt{ \tilde{s}_0/n})^2 /8 }    \,\,=\,\,  o( (p\tilde{s}_0)^{-1} ) 
	\eea
	for some large $C_\lambda$ because $\log p =o(n)$ and $( n^{-1} \bfX_n^T \bfX_n)_{{P}_{0,l}^{(j)}} \sim  W_{|{P}_{0,l}^{(j)}|} (n - |{P}_{0,l}^{(j)}|+1 , n (\sg_{0,{P}_{0,l}^{(j)} })^{-1} )$, 
	$$(\sg_{0,{P}_{0,l}^{(j)} })^{-1/2}  (n^{-1} \bfX_n^T \bfX_n)_{{P}_{0,l}^{(j)}}^{-1} (\sg_{0,{P}_{0,l}^{(j)} })^{-1/2} \sim W _{|{P}_{0,l}^{(j)}|} (n - |{P}_{0,l}^{(j)}|+1 ,  n I_{|{P}_{0,l}^{(j)}|} )$$ 
	and $ \tilde{s}_0 = o(n)$.
	Thus, it is easy to show that \eqref{post_conv1} is of order $o((p\tilde{s}_0)^{-1})$.

	Now we focus on \eqref{post_conv2} term to complete the proof. 
	Note that $\bbE^\pi( (\sg_{{P}_{0,l}^{(j)} })^{-1} \mid \bfX_n)  = (n+\nu +|{P}_{0,l}^{(j)}| -1) (1+g)^{-1} (\bfX_n^T \bfX_n)_{{P}_{0,l}^{(j)}}^{-1}$
	and
	$(n^{-1} \bfX_n^T \bfX_n)_{{P}_{0,l}^{(j)} }^{-1}  \sim  IW_{|{P}_{0,l}^{(j)}|} (n - |{P}_{0,l}^{(j)}|+1,  n(\sg_{0, {P}_{0,l}^{(j)}  })^{-1}  ) $.
	Here, $IW_q(\nu, A)$ denotes the inverse Wishart distribution for $q\times q$ positive definite matrices $B$ with the probability density proportional to $\det(B)^{-(\nu +2q)/2} \exp \{ -  tr(B^{-1} A) /2 \}$.
	Also note that \eqref{post_conv2} is bounded above by
	\bean
	 &&\hspace{-.7cm} \sum_{1\le l \le w_j}\bbP_0 \Big\{   \|  (n^{-1} \bfX_n^T \bfX_n)_{{P}_{0,l}^{(j)} }^{-1}   -  (\sg_{0,{P}_{0,l}^{(j)} })^{-1}   \|^2  \ge \frac{M^2}{144} |{P}_{0,l}^{(j)}| \frac{\log(n\vee p)}{n}   \Big\}   \label{post_conv21} \\
	&+&\hspace{-.5cm}    \sum_{1\le l \le w_j}\bbP_0 \Big\{   \|  \frac{  (\nu + |{P}_{0,l}^{(j)} |  -1)/n   - g}{1+g} (n^{-1} \bfX_n^T \bfX_n)_{{P}_{0,l}^{(j)} }^{-1}    \|^2  \ge  \nonumber\\
	&& \hspace{6cm} \frac{M^2}{144} |{P}_{0,l}^{(j)}| \frac{\log(n\vee p)}{n}   \Big\} .  \label{post_conv22}
	\eean
	Note that \eqref{post_conv22} is bounded above by
	\bea
	&& \sum_{1\le l \le w_j}\bbP_0 \Big\{   \|  \frac{  \nu + |{P}_{0,l}^{(j)} |  }{n} (n^{-1} \bfX_n^T \bfX_n)_{{P}_{0,l}^{(j)}  }^{-1}    \|^2  \ge \frac{M^2}{144} |{P}_{0,l}^{(j)}| \frac{\log(n\vee p)}{n}   \Big\} \\
	&\le& \sum_{1\le l \le w_j}\bbP_0 \left\{   \|   (n^{-1} \bfX_n^T \bfX_n)_{{P}_{0,l}^{(j)} }^{-1}    \|  \ge \frac{M}{12} \frac{ \sqrt{n  |{P}_{0,l}^{(j)}| \log(n\vee p)} }{ \nu + |{P}_{0,l}^{(j)} |  } \,  \right\}   \\
	&\le& p \, \bbP_0 \Big\{  \max_{1\le l\le w_j} \|   (n^{-1} \bfX_n^T \bfX_n)_{{P}_{0,l}^{(j)} }^{-1}    \|  \ge  C_\lambda /3 \,  \Big\} \\
	&\le& 2 p^2  e^{ -n (1- \sqrt{ \tilde{s}_0/n})^2 /8 }    \,\,=\,\, o( (p \tilde{s}_0)^{-1}  )
	\eea
	for all sufficiently large $n$ and some constant $C_\lambda>0$, where the last inequality follows from Lemma B.7 in \cite{lee2018optimal}.
	Also note that
	\bea
	&& \|  (n^{-1} \bfX_n^T \bfX_n)_{{P}_{0,l}^{(j)} }^{-1}   - (\sg_{0,{P}_{0,l}^{(j)} })^{-1}  \|  \\
	&\le& \|(n^{-1} \bfX_n^T \bfX_n)_{{P}_{0,l}^{(j)} }^{-1}\|  \cdot \| (\sg_{0,{P}_{0,l}^{(j)} })^{-1} \| \cdot  \|  n^{-1} (\bfX_n^T \bfX_n)_{{P}_{0,l}^{(j)} }   - \sg_{0, {P}_{0,l}^{(j)}  } \|  \\
	&\le& \frac{C_\lambda}{3}  \cdot \epsilon_0 \cdot  \|  n^{-1} (\bfX_n^T \bfX_n)_{{P}_{0,l}^{(j)} }   - \sg_{0, {P}_{0,l}^{(j)}  } \|
	\eea
	on the event $\tilde{N}_{nj}(C_\lambda)^c$, where the last inequality follows from Condition (B4).
	Since 
	\bea
	n^{-1} (\bfX_n^T \bfX_n)_{{P}_{0,l}^{(j)} }  &\sim&  W_{|{P}_{0,l}^{(j)}|} (n - |{P}_{0,l}^{(j)}|+1,  n(\sg_{0, {P}_{0,l}^{(j)}  })^{-1}  ) 
	\eea 
	with $\bbE_0  \{n^{-1} (\bfX_n^T \bfX_n)_{{P}_{0,l}^{(j)} }\} = \sg_{0, {P}_{0,l}^{(j)} }$ and $\|\sg_{0, {P}_{0,l}^{(j)} }\| \le \epsilon_0^{-1}$, the upper bound of \eqref{post_conv21} is given by
	\bea
	&& \hspace{-.5cm}  \sum_{1\le l \le w_j}\bbP_0 \Big\{   \|  n^{-1} (\bfX_n^T \bfX_n)_{{P}_{0,l}^{(j)} }   - \sg_{0, {P}_{0,l}^{(j)}  } \| \ge \frac{M}{4 C_\lambda \epsilon_0} \sqrt{|{P}_{0,l}^{(j)}| \frac{\log(n\vee p)}{n}  } \Big\} \\
	&\le&  \sum_{1\le l \le w_j}  5^{|{P}_{0,l}^{(j)} |}  \Big\{ e^{- c_1 |{P}_{0,l}^{(j)} | \log(n\vee p) }   +  e^{- c_2  \sqrt{n |{P}_{0,l}^{(j)} | \log(n\vee p) }  }  \Big\} 
	\,\,=\,\, o(  (p\tilde{s}_0)^{-1} )
	\eea
	for some constants $c_1$ and $c_2$ depending on $M$ and $\epsilon_0$, by Lemma B.6 in \cite{lee2018optimal}.
	It completes the proof.	
\end{proof}

\begin{proof}[Proof of Theorem \ref{thm:cons_BE}]
	Because
	\bea
	&&  \bbP_0 \bigg(  \, \big\|  \mathbb{E}^\pi (\Omega \mid \widehat{G}, \mathbf{X}_n) - \Omega_0 \big\|_1  \ge M \tilde{s}_0^2 \sqrt{ \frac{\log (n\vee p)}{n}}   \,\, \bigg)   \\
	&\le& \bbP_0 \bigg(  \, \big\|  \mathbb{E}^\pi (\Omega \mid G_0, \mathbf{X}_n) - \Omega_0 \big\|_1  \ge M \tilde{s}_0^2 \sqrt{ \frac{\log (n\vee p)}{n}}   \,\, \bigg) 
	+ \bbP_0 \big( \widehat{G} \neq G_0 \big)
	\eea
	and $\bbP_0 \big( \widehat{G} \neq G_0 \big) \lra 0$ as $n\to\infty$ by Theorem \ref{thm:post_ratio}, 
	it suffices to show that 
	\bea
	\bbP_0 \bigg(  \, \big\|  \mathbb{E}^\pi (\Omega \mid G_0, \mathbf{X}_n) - \Omega_0 \big\|_1  \ge M \tilde{s}_0^2 \sqrt{ \frac{\log (n\vee p)}{n}}   \,\, \bigg) 
	&\lra& 0
	\eea
	as $n\to\infty$.

	By the decomposability of $G_0$ and the posterior mean of $G$-Wishart distribution (\cite{banerjee2014posterior}, page 2119),  we have
	\bea
	&&  \mathbb{E}^\pi (\Omega \mid G_0, \mathbf{X}_n)  \\
	&=& \sum_{l=1}^{h_0}  \frac{n + \nu + |P_{0,l}|-1}{1+g} \big\{ (\bfX_n^T \bfX_n)_{P_{0,l}}^{-1}  \big\}^0  \\
	&& +\,\, \sum_{l=2}^{h_0}  \frac{n + \nu + |S_{0,l}|-1}{1+g} \big\{ (\bfX_n^T \bfX_n)_{S_{0,l}}^{-1}  \big\}^0  \\
	&\equiv& \sum_{l=1}^{h_0}  \big\{  \bbE^\pi \big(  (\sg_{P_{0,l}} )^{-1}  \mid \bfX_n\big)   \big\}^0 + \sum_{l=2}^{h_0}  \big\{ \bbE^\pi \big(  (\sg_{S_{0,l}} )^{-1}  \mid \bfX_n\big)    \big\}^0 .
	\eea 
	Thus,
	\bea
	&& \big\|  \mathbb{E}^\pi (\Omega \mid G_0, \mathbf{X}_n) - \Omega_0 \big\|_1  \\
	&\le& \Big\|  \sum_{l=1}^{h_0} \big\{  \bbE^\pi \big(  (\sg_{P_{0,l}} )^{-1}  \mid \bfX_n\big) - (\sg_{0, P_{0,l}})^{-1}  \big\}^0     \Big\|_1 \\
	&&+\,\, \Big\|  \sum_{l=2}^{h_0} \big\{  \bbE^\pi \big(  (\sg_{S_{0,l}} )^{-1}  \mid \bfX_n\big) - (\sg_{0, S_{0,l}})^{-1}  \big\}^0     \Big\|_1  \\
	&\le& \max_{1\le j \le p} \max_{1\le l \le w_j} \tilde{s}_0 \sqrt{|P_{0,l}^{(j)} |} \, \big\| \bbE^\pi \big(  (\sg_{P_{0,l}} )^{-1}  \mid \bfX_n\big)  -  (\sg_{0, P_{0,l}})^{-1}  \big\|   \\
	&& + \,\, \max_{1\le j \le p} \max_{2\le l \le w_j} \tilde{s}_0 \sqrt{|S_{0,l}^{(j)} |} \, \big\| \bbE^\pi \big(  (\sg_{S_{0,l}} )^{-1}  \mid \bfX_n\big)  -  (\sg_{0, S_{0,l}})^{-1}  \big\|
	\eea
	by the similar arguments used in the proof of Theorem \ref{thm:post_conv}.
	Since we have shown that \eqref{post_conv2} is of order $o( (p\tilde{s}_0)^{-1} )$ in the proof of Theorem \ref{thm:post_conv}, this completes the proof.
\end{proof}

\end{appendix}

\bibliographystyle{dcu}
\bibliography{decomp-graphs}


\end{document}